
\documentclass[11pt,leqno]{amsart}
\usepackage{amsthm,amsfonts,bm,amssymb,amsmath,color}%
\usepackage{graphicx}
\numberwithin{equation}{section}
\setcounter{tocdepth}{2}

\usepackage{tikz}

\usepackage{pgfplots}

\usepackage{mathtools,hyperref}


\renewcommand\d{\partial}
\renewcommand\a{\alpha}

\newcommand\R{\mathbb R}\newcommand\Z{\mathbb Z}
\newcommand\C{\mathbb C}

\def\l{\lambda}

\def\epsilon{\varepsilon}
\def\e{\varepsilon}

\def\dsp{\displaystyle}

\newcommand\br{\begin{rem}}
\newcommand\er{\end{rem}}
\newcommand\bp{\begin{pmatrix}}
\newcommand\ep{\end{pmatrix}}
\newcommand\be{\begin{equation}}
\newcommand\ee{\end{equation}}
\newcommand\ba{\begin{equation}\begin{aligned}}
\newcommand\ea{\end{aligned}\end{equation}}

\setlength{\evensidemargin}{0in} \setlength{\oddsidemargin}{0in}
\setlength{\textwidth}{6in} \setlength{\topmargin}{0in}
\setlength{\textheight}{8in}

\newcommand{\T}{{\mathbb T}}

\newcommand{\Id}{{\rm Id }}

\newtheorem{defi}{Definition}[section]
\newtheorem{theo}{Theorem}

\newtheorem{lem}[defi]{Lemma}

\newtheorem{rem}[defi]{Remark}

\numberwithin{equation}{section}

\begin{document}

\title[Dispersive regularization for phase transitions]{Dispersive regularization for phase transitions}

\author{Federico Cacciafesta}
\address{Dipartimento di Matematica, Univesit\`a degli Studi di Padova}, \email{cacciafe@math.unipd.it}
\author{Marta Strani}
\address{Dipartimento di Scienze Molecolari e Nanosistemi, Universit\`a Ca' Foscari Venezia} %
 \email{marta.strani@unive.it}
\author{Benjamin Texier}
\address{Institut Camille Jordan UMR CNRS 5208, Universit\'e Claude Bernard - Lyon 1} \email{texier@math.univ-lyon1.fr} 

\begin{abstract} We introduce a dispersive regularization of the compressible Euler equations in Lagrangian coordinates, in the one-dimensional torus. We assume a Van der Waals pressure law, which presents both hyperbolic and elliptic zones. The dispersive regularization is of Schr\"odinger type. In particular, the regularized system is complex-valued. It has a conservation law, which, for real unknowns, is identical to the energy of the unregularized physical system. The regularized system supports high-frequency solutions, with an existence time or an amplitude which depend strongly on the pressure law.   
\end{abstract}

\maketitle

\section{Introduction}

Given a Van der Waals pressure law, that is such that
\be \label{vdw0}
 p'(u_1) < 0, \qquad \mbox{for some range of $u_1,$}
\ee
 the compressible Euler equations in Lagrangian coordinates 
\be \label{vdw} \left\{\begin{aligned}
\d_t u_1 + \d_x u_2 & = 0 ,\\ \d_t u_2 + \d_x (p(u_1)) & = 0
\end{aligned}\right.\ee
have an elliptic principal symbol. As a consequence, the associated initial-value problems are ill-posed in Sobolev spaces \cite{Met,NT1,NT2}, and even in Gevrey space \cite{Mor1,Mor2,Mor3}. 

In \eqref{vdw}, the real unknown $u_1$ is a fluctuation of density and $u_2$ is a velocity. The anomalous behavior \eqref{vdw0} means that as the density increases, the pressure diminishes. This is typical of a phase transition. We discuss phase transitions and the link between the mathematical models and the physical Van der Waals pressure law in Section \ref{sec:transition}. 

Given the ill-posedness of the initial-value problems for \eqref{vdw0}-\eqref{vdw}, we put forward a regularization of \eqref{vdw0}-\eqref{vdw} which allows for a form of well-posedness. Our regularization is {\it not diffusive}, but {\it dispersive}: we consider the system 
\be \label{vdw:i} \left\{\begin{aligned}
\d_t u_1 + \d_x u_2 & = 0 ,\\ \d_t u_2 + \d_x (p(u_1)) + i \e \d_x^2 u_2 & = 0
\end{aligned}\right.\ee
with $\e > 0,$ $t \geq 0$ and $u_1(t),u_2(t): \, \T \to \C.$ Compared to diffusive or real dispersive regularizations, the upside of \eqref{vdw:i} is that the regularization is energy-preserving in some sense. The downside is that we lose the real character of $u_1$ and $u_2.$

For some mathematical pressure functions $p$ which exhibit the anomalous behavior \eqref{vdw0}, we prove local-in-time well-posedness for the initial-value problems for \eqref{vdw:i} for high-frequency (that is with characteristic frequencies $O(1/\e^2)$), zero-mean and negative-energy data in $H^1(\T) \times L^2(\T).$ For some pressure laws, we have to restrict to small solutions (with respect to $\e),$ for others to small time; see Theorems \ref{th:1} and \ref{th:2}.

We then modify \eqref{vdw:i} and consider 
\be \label{modified}
 \left\{\begin{aligned}
\d_t u_1 + \d_x \overline{u_2} & = 0 ,\\ \d_t u_2 + \d_x \overline{p(u_1)}  + i \e \d_x^2 u_2 & = 0,
\end{aligned}\right.\ee
where $\bar z$ is the complex conjugate of $z \in \C.$ If $\e = 0$ and the pressure law $p$ satisfies \eqref{vdw0}, then the principal symbol for \eqref{modified} is not hyperbolic, hence the initial-value problems for \eqref{modified} are ill-posed in Sobolev and Gevrey spaces. For a pressure law $p$ satisfying \eqref{vdw0}, we prove however that the initial-value problem for \eqref{modified} is well-posed in $H^1(\T) \times L^2(\T),$ for high-frequency, zero-mean and negative energy data. The point here is that the solutions are $O(1)$ and defined in time $O(1)$ with respect to $\e;$ see Theorem \ref{th:3}. 

\subsection{Phase transitions} \label{sec:transition} 
The physical model is \eqref{vdw} at $\e = 0,$ in which $u_1$ represents a fluctuation of density and $u_2$ a velocity. The Van der Waals pressure law typically describes the pressure as a function of specific volume $v$ (equal to the volume typically occupied by a particle of fluid) in real (as opposed to ideal) fluids. For some physical constant $b > 0,$ we have $v > b,$ and the Van der Waals pressure law is 
\be \label{physical:vdw}
p(v) = \frac{k_B T}{v - b} - \frac{a}{v^2},
\ee
where $k_B$ is the Boltzmann constant, $T$ is temperature, and $a$ is another physical constant. For $T < T_c,$ where $k_B T_c = 8 a/(27 b),$ the pressure profile has the form shown on Figure \ref{fig1}. For $\underline v < v < \bar v,$ we have $\d_v p > 0,$ meaning that if we squeeze the gas, the pressure diminishes. For small $v,$ the fluid is in liquid state: a large change of pressure entails only a small change in specific volume; for large $v,$ the fluid is a gas: a small change of pressure implies a large change in volume. The phase transition takes place in the intermediate range $p \in [\underline p, \bar p]:$ given $p$ in this range, there are three possible configurations of the gas. 

A mathematical model for the shape of the pressure in terms of the fluctuation of density $u_1$ is 
\be \label{math:vdw}
p_0(u_1) = - u_1 + u_1^3.
\ee
The mathematical pressure \eqref{math:vdw} is compared with the physical model \eqref{physical:vdw} in Figure \ref{fig1}. The cubic model \eqref{math:vdw} makes sense in view of the profile \eqref{physical:vdw} for $p \in [\underline p, \bar p].$ Note that $u_1$ is related to $1/v,$ the inverse of the specific volume, so that in the mathematical pressure law, the gas state (low density) is to the left, and the liquid state (high density) to the right.

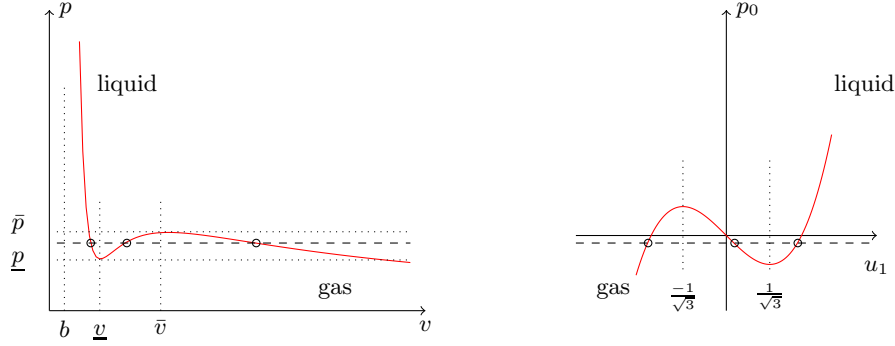
\begin{figure} 
\begin{tikzpicture}

\draw[->] (-6,0) -- (-1,0) ;
\draw[->] (-6, 0) -- (-6, 4); 
\draw (-6,4) node[anchor=west] {\footnotesize $p$} ; 
\draw (-1,0) node[anchor=north] {\footnotesize $v$} ;

\begin{scope}[shift={(3,1)}]
\draw[->] (-2,0) -- (2,0) ; 
\draw[->] (0, -1) -- (0, 3); 
\draw (0,3) node[anchor=west] {\footnotesize $p_0$} ;
\draw (2,-.2) node[anchor=north] {\footnotesize $u_1$} ; 
\draw[domain=-1.2:1.4, samples=100,red]
plot( \x, {  - \x + (\x)^3 } ) ; 
\draw[dashed] (-2,-.1) -- (2,-.1) ; 

\draw (-.577,-.5) node[anchor=north] {\tiny $\frac{-1}{\sqrt 3}$} ;

\draw (.577,-.5) node[anchor=north] {\tiny $\frac{1}{\sqrt 3}$} ; 

\draw (-1.04,-.1) circle (0.05cm) ; 
\draw (.11,-.1) circle (0.05cm) ; 
\draw (.95,-.1) circle (0.05cm) ; 

\draw (-1.5, -.5) node[anchor=north] {\footnotesize gas} ; 
\draw (1.3,2) node[anchor=west] {\footnotesize liquid} ; 

\draw[dotted] (-.577,-.45) -- (-.577,1) ;

 \draw[dotted] (.577,-.45) -- (.577,1) ;
\end{scope}

\begin{scope}[shift={(-6.2,0)}]
\draw[domain=.6:5, samples=100,red]
plot( \x, { 4.05/(\x -.4) - 4*1.5/(\x^2) } ) ;

\draw[dotted] (.4,0) -- (.4, 3) ;

\draw (.4,0) node[anchor=north] {\footnotesize $b$ } ; 

\draw[dotted] (.87,0) -- (.87,1.5) ;

\draw (.87,-.05) node[anchor=north] {\footnotesize $\underline v$ } ; 

\draw[dotted] (1.68,0) -- (1.68,1.5) ; 

\draw (1.68,0) node[anchor=north] {\footnotesize $\bar v$} ;

\draw[dotted] (0.3,1.05) -- (5,1.05) ;

\draw[dotted] (0.3,.675) -- (5,.675) ; 

\draw (0,1.15) node[anchor=east] {\footnotesize $\bar p$} ; 

\draw (0,.655) node[anchor=east] {\footnotesize $\underline p$} ; 

\draw (.7,3) node[anchor=west] {\footnotesize liquid} ; 

\draw (4,.5) node[anchor=north] {\footnotesize gas} ; 

\draw[dashed] (0.3, .9) -- (5,.9) ;

\draw (.75, .9) circle (.05cm) ; 
 
 \draw (1.23, .9) circle (.05cm) ; 
 
 \draw (2.95, .9) circle (.05cm) ; 
 
\end{scope}

\end{tikzpicture}

\caption{To the left, the physical pressure law \eqref{physical:vdw} as a function of the specific volume $v.$ The anomalous behavior corresponds to $\d_v p > 0,$ for $v \in [\underline v, \bar v].$ To the right, the mathematical model \eqref{math:vdw} as a function of the fluctuation of density $u_1.$ The anomalous behavior corresponds to $p_0'(u_1) < 0,$ for $u_1 \in [-1/\sqrt 3,1/\sqrt 3].$} \label{fig1}

\end{figure}

\subsection{Pressure and conservation laws} 

We consider \eqref{vdw:i} with $p \in \{p_0, p_1, p_2\},$ with 
 \be \label{pressure:laws}
 p_0(u) = - u + u^{3}, \qquad p_1(u) =  - u + |u|^{2} u, \qquad p_2(u) = - u + \bar u^{3},
\ee
where $\bar u$ is the complex conjugate of $u.$ For real $u,$ these laws coincide with \eqref{math:vdw}. Their relationship with the physical Van der Waals pressure law \eqref{physical:vdw} is discussed above in Section \ref{sec:transition}.

Associated with $p_1$ and $p_2,$ the regularized system \eqref{vdw:i} has an $\e$-independent conservation law. Indeed, given a smooth solution $(u_1,u_2)$ to \eqref{vdw:i}, we have with the second equation in \eqref{vdw:i}: 
$$ \Re e \, (\d_t u_2, u_2)_{L^2(\T)} -  \Re e \, (p(u_1), \d_x u_2)_{L^2(\T)} - \e \Re e\, (i \d_x^2 u_2, u_2)_{L^2(\T)} = 0.$$ 
The third scalar product above vanishes by symmetry. Thus, using the first equation in \eqref{vdw}, 
\be \label{for:cons:law} \Re e \, (\d_t u_2, u_2)_{L^2(\T)} + \Re e \, (p(u_1), \d_t u_1)_{L^2(\T)} = 0.\ee
With the pressure law $p_1,$ we have
$$ \Re e \, \int_{\T} |u_1|^{2} u_1 \d_t \bar u_1 \, dx = \frac{1}{4} \d_t \int_{\T} |u|^{4} \, dx.$$ 
With the pressure law $p_2,$ we have
$$ \Re e \, \int_{\T} \bar u_1^{3} \d_t \bar u_1 \, dx = \frac{1}{4} \d_t \int_{\T} \Re e \, \bar u^{4}(t,x)\, dx.$$
Thus with 
$$ P(u) = \frac{1}{4} |u|^{4}, \,\, \mbox{if $p = p_1$}, \qquad \mbox{or} \quad P(u) = \frac{1}{4} \Re e \, u^{4}, \,\, \mbox{if $p = p_2.$}$$
 we see that the energy ${\mathcal E}$ is conserved in time for the solutions to \eqref{vdw:i}:
\be \label{cons:law}
 {\mathcal E}(u_1,u_2) = \int_{\T} \big( P(u_1(t,x)) - \frac{1}{2} |u_1(t,x)|^2 +  \frac{1}{2} | u_2(t,x)|^2 \big) \, dx \equiv\mbox{constant.}
\ee
Notice that the energy \eqref{cons:law} does not depend on $\e,$ and, for real $u$, coincides with the energy of the original system \eqref{vdw}.

For the modified system \eqref{modified}, positing a smooth solution $(u_1,u_2),$ we have \eqref{for:cons:law}. If $p = p_0$, this leads to the conservation of a modified energy :
\be \label{modified:cons:law}
 {\mathcal E}(u_1,u_2) = \int_{\T} \Big( \frac{1}{4} \Re e \, u_1(t,x)^4 - \frac{1}{2} \Re e \, u_1(t,x)^2 +  \frac{1}{2} | u_2(t,x)|^2 \,\Big) \, dx \equiv\mbox{constant.}
 \ee

\subsection{First result: $O(1)$ solutions in time $O(\e^2)$ with $p = p_0$}

High-frequency data with zero mean generate solutions of \eqref{vdw:i} which are defined over small time intervals:  

\begin{theo} \label{th:1} For some $T > 0,$ for $\e$ small enough: given the pressure law $p = p_0$ \eqref{pressure:laws}, given data $(u_1^0, u_2^0) \in H^1(\T) \times L^2(\T),$ such that 
$$ %
\int_{\T} u_1^0(x) \, dx = \int_{\T} u_2^0(x) \, dx = 0, \quad \mbox{and} \quad \max(\| u_1^0\|_{H^1},\| u_2^0 \|_{L^2}) < 1/6,
$$ %
 there exists a unique solution $u_\e \in C^0([0, \e^2 T], H^1(\T) \times L^2(\T))$ to the system \eqref{vdw:i} with the datum $$u_\e(0,x) =\left(u_1^0\left(\frac{x}{\e^2}\right), \,  u_2^0\left(\frac{x}{\e^2} \right) \, \right).$$
\end{theo}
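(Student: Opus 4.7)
The plan is to reduce to an $O(1)$ problem in the rescaled time variable $\tau = t/\varepsilon^2$, exploit a uniform dispersive estimate for the linearized flow, and close by a standard fixed point. Specifically, I would set $v_j(\tau,y) = u_j(\varepsilon^2 \tau,\varepsilon^2 y)$, so that $v_j(0,\cdot) = u_j^0$ sit in $H^1(\T) \times L^2(\T)$ with size less than $1/6$ and zero mean (the periodicity is consistent provided $1/\varepsilon^2$ is a positive integer, which is the natural interpretation). The rescaled system reads
\begin{equation*}
\partial_\tau v_1 + \partial_y v_2 = 0,\qquad \partial_\tau v_2 + \partial_y p_0(v_1) + i \varepsilon^{-1} \partial_y^2 v_2 = 0,
\end{equation*}
so the theorem reduces to existence on a fixed interval $[0,T]$ with $T$ independent of $\varepsilon$.

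For the linear part, I would pass to Fourier: for each integer mode $\eta \neq 0$, absorbing the $-v_1$ piece of $p_0$ into the linear symbol gives
\begin{equation*}
A(\eta) = \begin{pmatrix} 0 & -i\eta \\ i\eta & i\varepsilon^{-1}\eta^2 \end{pmatrix},
\end{equation*}
with eigenvalues $\lambda_\pm(\eta) = \tfrac{i}{2}\bigl( \varepsilon^{-1}\eta^2 \pm \sqrt{\varepsilon^{-2}\eta^4 - 4\eta^2}\bigr)$. For $|\eta|\geq 1$ and $\varepsilon$ small, the radicand is positive, so $\lambda_\pm$ are distinct and purely imaginary; this is exactly where the zero-mean hypothesis (removing $\eta = 0$) and the smallness of $\varepsilon$ enter. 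Combining $|e^{\tau\lambda_\pm}|=1$ with the identities $\lambda_+ + \lambda_- = i\varepsilon^{-1}\eta^2$, $\lambda_+ \lambda_- = \eta^2$, and the lower bound $|\lambda_+ - \lambda_-| \gtrsim \varepsilon^{-1}\eta^2$, explicit spectral formulas yield the entrywise bounds
\begin{equation*}
|(e^{\tau A(\eta)})_{11}|,\ |(e^{\tau A(\eta)})_{22}| \leq C, \qquad |(e^{\tau A(\eta)})_{12}|,\ |(e^{\tau A(\eta)})_{21}| \leq C \varepsilon/|\eta|,
\end{equation*}
uniformly in $\tau\geq 0$, $\eta$, and $\varepsilon$. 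Summing over $\eta$ with weights $(1+\eta^2)$ on the first slot and $1$ on the second, the $1/|\eta|$ decay of the off-diagonals precisely offsets the $H^1$ derivative weight, giving $\|e^{\tau A}V^0\|_{H^1\times L^2}\leq C\|V^0\|_{H^1\times L^2}$ on the zero-mean subspace.

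The nonlinear step is then a routine continuity argument. Writing the Duhamel formula
\begin{equation*}
V(\tau) = e^{\tau A} V^0 + \int_0^\tau e^{(\tau-s) A} F(V(s))\,ds, \qquad F(V) = (0,\,-\partial_y(v_1^3)),
\end{equation*}
the linear estimate above together with the 1D algebra property $\|v_1^3\|_{H^1(\T)} \leq C\|v_1\|_{H^1(\T)}^3$ (coming from $H^1 \hookrightarrow L^\infty$) gives
\begin{equation*}
\|V(\tau)\|_{H^1\times L^2} \leq C\|V^0\|_{H^1\times L^2} + C\int_0^\tau \|V(s)\|_{H^1\times L^2}^3\,ds.
\end{equation*}
A bootstrap then produces some $T>0$, independent of $\varepsilon$, on which a unique solution exists once the data satisfy the size constraint $< 1/6$, and uniqueness follows by the same contraction applied to the difference of two solutions. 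Undoing the rescaling returns the stated existence on $[0,\varepsilon^2 T]$.

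The heart of the argument, and the step I expect to be the main obstacle, is the uniform linear estimate. Since $A(\eta)$ is not anti-Hermitian in the Euclidean inner product, a naive $L^2\times L^2$ energy method loses a derivative, and one must extract precisely the right decay of the off-diagonal entries of $e^{\tau A(\eta)}$ from the spectral formulas. It is the dispersive term $i\varepsilon^{-1}\partial_y^2 v_2$ that provides, via the lower bound on $|\lambda_+-\lambda_-|$, exactly the room needed to close the $H^1 \times L^2$ estimate without derivative loss; the smallness condition $< 1/6$ is then the numerical budget left after tracking the constants in the linear and algebra estimates.
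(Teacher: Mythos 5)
Your proposal is correct, but it follows a genuinely different route from the paper. You rescale \emph{both} space and time by $\e^2$ and then perform a direct spectral analysis of the linearized Fourier symbol: the diagonal entries of $e^{\tau A(\eta)}$ are uniformly $O(1)$ and the off-diagonal entries decay like $\e/|\eta|$, and this off-diagonal gain exactly absorbs the $H^1$ weight on the first slot, giving $\|e^{\tau A}V^0\|_{H^1\times L^2}\lesssim\|V^0\|_{H^1\times L^2}$ uniformly in $\e$, $\tau$, and $\eta\neq 0$. Duhamel plus the one-dimensional algebra bound $\|v_1^3\|_{H^1}\lesssim\|v_1\|_{H^1}^3$ then closes a standard contraction, and undoing the time rescaling yields the $O(\e^2)$ lifespan. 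The argument is sound; the only slip is cosmetic (with your $A(\eta)$ one has $\lambda_+\lambda_-=\det A(\eta)=-\eta^2$, not $+\eta^2$), which does not affect the key facts — purely imaginary eigenvalues for $|\eta|\ge 1$, $\e$ small, with gap $|\lambda_+-\lambda_-|=\sqrt{\e^{-2}\eta^4-4\eta^2}\gtrsim\e^{-1}\eta^2$ — nor the entrywise bounds.

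The paper does something structurally different. It rescales only space and conjugates by $(\Id+\e M)^{-1}$ where $M=-mJ$ is built from the bounded antiderivative $m$ of Section \ref{sec:m}. The point of this normal form is the algebraic cancellation $[iD\d_x^2,M]+A\d_x=0$, which kills the most singular $\e^{-2}A\d_x$ term; the reduced system \eqref{reduced:system} decouples into a scalar ODE in $v_1$ and a semilinear Schr\"odinger equation in $v_2$, the fast phases $e^{\pm it/\e}$ are factored out, and a fixed point argument gives the $O(\e^2)$ time. Both approaches exploit the dispersion in the $(2,2)$ slot to compensate the derivative in the off-diagonal coupling, so they are morally close; but your direct Fourier estimate is an end-to-end shortcut, whereas the paper's normal form is deliberate infrastructure — it exposes the $e^{\pm it/\e}$ oscillations and the conservation law in the form needed for Theorems \ref{th:2} and \ref{th:3}, which rest on energy conservation and iterated integrations by parts in time and do not have an equally simple spectral analogue. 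For Theorem \ref{th:1} taken alone, your route is simpler and perfectly adequate.
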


The proof is based on a reduction to normal form followed by a standard fixed point theorem. Note the smallness of the time interval. This is remedied in Theorem \ref{th:2}, but only for small-amplitude solutions, in the cases $p = p_1$ and $p = p_2.$

\subsection{Second result: small solutions in time $O(1)$ with $p = p_1$ or $p = p_2$} Small high-frequency data with zero mean and negative energy generate solutions of \eqref{vdw:i} which are defined over time intervals independent of $\e,$ if $ p = p_1$ or $p = p_2$ \eqref{pressure:laws}:

\begin{theo} \label{th:2} For some $T >0,$ for $\e$ small enough: given the pressure law $p = p_1$ or $p = p_2$ \eqref{pressure:laws}, given data $(u_1^0, u_2^0) \in H^1(\T) \times L^2(\T),$ such that 
\be \label{negative:energy}
\int_{\T} u_1^0(x) \, dx = \int_{\T} u_2^0(x) \, dx = 0, \quad \max(\| u_1^0\|_{H^1}, \| u_2^0 \|_{L^2}) < 1/6, \quad \mbox{and} \quad {\mathcal E}(u_1^0, u_2^0) \leq 0, 
\ee
where the energy ${\mathcal E}$ is defined in \eqref{cons:law}, there exists a unique solution $u_\e \in C^0([0, T], H^1(\T) \times L^2(\T))$ to the system \eqref{vdw:i} with the datum $$u_\e(0,x) = \e^{\a} \left(u_1^0\left(\frac{x}{\e^2}\right), \,  u_2^0\left(\frac{x}{\e^2} \right) \, \right).$$
with $\a = 1/2$ if $p = p_1$ and $\a = 1/4$ if $p = p_2.$  
\end{theo}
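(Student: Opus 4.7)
The plan follows the template of Theorem \ref{th:1}---rescale to natural variables, reduce to a normal form, close a fixed-point argument---but replaces the local-time step by a long-time bootstrap driven by the conservation law \eqref{cons:law}. First, I would pass to the rescaled variables $\tau = t/\e^2$, $y = x/\e^2$, with $\tilde u_j(\tau, y) := u_j(\e^2 \tau, \e^2 y)$. The rescaled data $\tilde u_j(0,y) = \e^{\a}u_j^0(y)$ are zero-mean and small in $H^1(\T)\times L^2(\T)$, and the rescaled system reads
\[
\d_\tau \tilde u_1 + \d_y \tilde u_2 = 0, \qquad \d_\tau \tilde u_2 + \d_y p(\tilde u_1) + i\e^{-1}\d_y^2 \tilde u_2 = 0,
\]
with target interval $\tau\in[0,T/\e^2]$. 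The normal form plus fixed-point scheme of Theorem \ref{th:1}, applied to these small data, provides local existence on a $\tau$-interval whose length is controlled by the $H^1\times L^2$ norm of the data and hence does not shrink with $\e$.

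Next I would extract uniform a priori bounds from \eqref{cons:law}. The quantity $\widetilde{\mathcal E}(\tilde u_1,\tilde u_2) := \e^{-2}{\mathcal E}(u_1,u_2)$ is conserved and has the same functional form as \eqref{cons:law} but in the variable $y$. Since $P(\e^{\a}v)=\e^{4\a}P(v)$ for both $p_1$ and $p_2$, the hypothesis ${\mathcal E}(u_1^0,u_2^0)\le 0$ yields, for $\e<1$,
\[
{\mathcal E}(u_1(0),u_2(0))\;\le\; \e^{2\a}\,\tfrac{\e^{2\a}-1}{4}\,\|u_1^0\|_{L^4}^4\;\le\; 0,
\]
so $\widetilde{\mathcal E}(\tilde u(\tau))\le 0$ along the flow. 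For $p=p_1$, since $P\ge 0$ and $\|\tilde u_1\|_{L^2}\le\|\tilde u_1\|_{L^4}$ on $\T$, this yields uniform bounds $\|\tilde u_2(\tau)\|_{L^2}\le 1/\sqrt 2$ and $\|\tilde u_1(\tau)\|_{L^4}\le \sqrt 2$. For $p=p_2$ the sign of $\Re e\,\tilde u_1^4$ is not controlled, but the extra smallness $\|\tilde u_j(0)\|=O(\e^{1/4})$ of the rescaled data, combined with a bootstrap, recovers analogous uniform bounds.

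Finally I would iterate the local existence up to $\tau=T/\e^2$. The key ingredient is an $H^1\times L^2$ energy estimate for the normal-form unknown, where the cubic contribution is bounded in $L^2$ by standard Moser-type inequalities together with the uniform $L^4$ control on $\tilde u_1$ obtained above. After the normal-form reduction, the effective amplitude of the nonlinear terms carries a power of $\e$ (coming from the smallness $\e^\a$ of the data and from the removal of non-resonant cubic interactions); the exponents $\a=1/2$ and $\a=1/4$ are the borderline values for which the resulting Gronwall exponential remains $O(1)$ over the long interval $[0,T/\e^2]$, equivalently $t\in[0,T]$.

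The hard part will be this last step. The conservation law only provides $L^4\times L^2$ control, not $H^1\times L^2$, and the equation $\d_\tau \tilde u_1=-\d_y\tilde u_2$ ties the $H^1$-regularity of $\tilde u_1$ to $\d_y^2\tilde u_2$, which is not bounded a priori. The role of the normal form is precisely to decouple, up to higher-order-in-$\e$ errors, the $H^1$-dynamics of $\tilde u_1$ from the fast Schrödinger dynamics of $\tilde u_2$; this decoupling, combined with the uniform bounds coming from the conservation law, is what lets the bootstrap close on the long scale $\tau=O(\e^{-2})$ and singles out the exponents $\a=1/2, 1/4$ of the statement.
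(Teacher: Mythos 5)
Your skeleton -- spatial rescaling, normal-form reduction, fixed point, a priori bounds from \eqref{cons:law}, iteration over many short time steps -- is the same skeleton as the paper's. But there are two genuine gaps, one of which is fatal for the $p=p_2$, $\a=1/4$ half of the statement.

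\textbf{Missing: the integration by parts in time for $p=p_2$.} After the normal form and factoring out $e^{\mp it/\e}$, the ODE for $w_1$ reads $\d_t w_1 = -i\e^{2\a-1}(\Id-\Pi_0)(|w_1|^2 w_1) - R_1$ for $p=p_1$, but $\d_t w_1 = -i\e^{2\a-1}e^{-4it/\e}(\Id-\Pi_0)\bar w_1^3 - R_1$ for $p=p_2$. Your bootstrap requires the prefactor of the cubic term in the implicit representation of $w_1$ to be $O(1)$, which forces $\e^{2\a-1}=O(1)$, i.e.\ $\a\ge 1/2$. With $p=p_2$ and $\a=1/4$ the raw prefactor is $\e^{-1/2}$ and nothing you have written controls it: the ``extra smallness'' and ``bootstrap'' you invoke cannot buy back a negative power of $\e$. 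The ingredient you are missing is the oscillatory factor $e^{-4it/\e}$, present for $p_2$ but \emph{not} for $p_1$ because the cubic $|w_1|^2 w_1$ is phase-neutral. One integration by parts in time converts $\e^{2\a-1}$ into $\e^{4\a-1}$ (plus $O(\e^{2\a})$ boundary terms), and $\e^{4\a-1}=O(1)$ for $\a=1/4$. This is the mechanism that distinguishes $\a=1/4$ from $\a=1/2$; your claim that the normal-form decoupling plus the conservation law ``singles out the exponents'' does not account for this asymmetry.

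\textbf{The conservation law is used in the wrong form.} You extract \emph{absolute} $O(1)$ bounds $\|\tilde u_1\|_{L^4}\le\sqrt 2$, $\|\tilde u_2\|_{L^2}\le 1/\sqrt 2$. These do not reflect the smallness $O(\e^\a)$ of the data and are too weak to close the iteration: in the fixed-point map, the dangerous terms in the Schr\"odinger component carry a $\rho^2$-type factor coming from $q(\e^\a\tilde u_1)$, and they must be controlled relative to the current iterate. What the negative-energy assumption actually buys is the \emph{relative} inequality $\|\tilde u_2(t)\|_{L^2}\le\|\tilde u_1(t)\|_{L^2}$ (for $p_1$; a close analogue for $p_2$), which transfers through the normal-form change of variables to $\|w_2\|_{L^2}\lesssim\|w_1\|_{L^2}$. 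This is what eliminates the Schr\"odinger unknown from the continuation: the singular bound on $F_2$ is only used to start the iteration, and afterwards $w_2$ is slaved to $w_1$, whose evolution is governed by a nonsingular ODE bound. Finally, the continuation itself is a discrete concatenation of short-time solutions (each of length $T_\e=O(\e^{2(1-\a)})$ in $t$), with the $w_1$-radius growing linearly in the number of steps -- not a Gronwall exponential as you suggest -- and the total growth over $O(\e^{-2(1-\a)})$ steps is $O(\rho)$, which is why $T=O(1)$ is reached. Your proposal correctly identifies the hard part (closing the $H^1$ control of $w_1$ over the long interval) but leaves it unresolved; the two items above are exactly what resolves it.
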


Compared to Theorem \ref{th:1}, the extra ingredient in the proof of Theorem \ref{th:2} is the exploitation of the conservation of energy in order to extend the solutions up to an observable time interval, and, in the case $p = p_2,$ an integration by parts in time that takes advantage of fast oscillations. See Section \ref{sec:outline} for more details on the proof. For the proof to go through, the solutions however need to have a small amplitude. This is remedied in Theorem \ref{th:3}, but only for the modified system \eqref{modified}.

\subsection{Third result: $O(1)$ solutions in time $O(1)$ for the modified system with $p = p_0$}

For the modified system \eqref{modified} with the pressure law $p = p_0,$ we have large solutions in time $O(1).$ The statement of the theorem involves the norm $c > 0$ of the Sobolev embedding $H^1(\T) \hookrightarrow L^\infty(\T).$ 

\begin{theo} \label{th:3} Given $\l \in (0, 1/(c \sqrt 2)],$ for some $T > 0,$ for $\e$ small enough: given the pressure law $p = p_0$, given data $(u_1^0, u_2^0) \in H^1(\T) \times L^2(\T),$ such that 
\be \label{negative:energy:modified}
\int_{\T} u_1^0(x) \, dx = \int_{\T} u_2^0(x) \, dx = 0, \quad \max(\| u_1^0\|_{H^1},\| u_2^0 \|_{L^2}) < 1/6, \quad \mbox{and} \quad {\mathcal E}(u_1^0, u_2^0) \leq 0, 
\ee
where the energy is defined in \eqref{modified:cons:law}, there exists a unique solution $u_\e \in C^0([0, T], H^1(\T) \times L^2(\T))$ to the system \eqref{modified} with the datum $$u_\e(0,x) = \l \left(u_1^0\left(\frac{x}{\e^2}\right), \,  u_2^0\left(\frac{x}{\e^2} \right) \, \right).$$
\end{theo}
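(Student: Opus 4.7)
The plan is to follow the same two-step strategy as in Theorems \ref{th:1} and \ref{th:2}: construct a local solution in $H^1(\T)\times L^2(\T)$ by reduction to normal form and a fixed-point argument, and then extend it to an interval $[0,T]$ with $T$ independent of $\e$ using the conservation law \eqref{modified:cons:law} together with the hypothesis ${\mathcal E}(u_1^0,u_2^0)\leq 0$.

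For the local existence step, I would first substitute $v:=\overline{u_2}$ so that \eqref{modified} becomes
\[
\d_t u_1 + \d_x v = 0, \qquad \d_t v + \d_x p_0(u_1) - i\e\d_x^2 v = 0,
\]
which is of the same form as \eqref{vdw:i} with $p=p_0$, up to the sign of the dispersive coefficient. That sign is irrelevant for the local theory since the Schr\"odinger unitary group is bounded on each $H^s$ regardless of sign, so the reduction-to-normal-form plus contraction argument used for Theorem \ref{th:1} supplies a unique solution on some interval $[0,T_0(\e)]$.

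For the a priori step, I would propagate the bound by exploiting \eqref{modified:cons:law}. From ${\mathcal E}(u_1(t),u_2(t))={\mathcal E}(u_1^0,u_2^0)\leq 0$ one obtains
\[
\|u_2(t)\|_{L^2}^2 \leq \Re\int_\T u_1(t,x)^2\,dx-\tfrac{1}{2}\Re\int_\T u_1(t,x)^4\,dx.
\]
A continuity/bootstrap argument based on the Sobolev embedding $\|u_1\|_{L^\infty}\leq c\|u_1\|_{H^1}$ and the threshold $\lambda\leq 1/(c\sqrt{2})$ would show that the initial bound $\|u_1(t)\|_{L^\infty}^2\leq 1/2$ is preserved as long as the solution exists; combined with the first equation of \eqref{modified}, this yields $\e$-uniform control on $\|u_1(t)\|_{H^1}+\|u_2(t)\|_{L^2}$. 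A standard continuation argument then extends the solution to $[0,T]$ with $T>0$ independent of $\e$.

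The main obstacle I expect is closing this bootstrap: the density $\frac{1}{4}\Re u_1^4-\frac{1}{2}\Re u_1^2$ of the modified energy is not sign-definite for complex $u_1$, so the $L^\infty$-bound on $u_1$ must be recovered indirectly via the Sobolev embedding and a continuity argument. The tuning constant $1/(c\sqrt{2})$ in the hypothesis is precisely the one that keeps the cubic nonlinearity $u_1^3$ subcritical with respect to the conserved quantity, allowing the bootstrap to close uniformly in $\e$.
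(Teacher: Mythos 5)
Your opening observation is correct and worth flagging: setting $v=\overline{u_2}$ (equivalently, $(\overline{u_1},u_2)$) does turn \eqref{modified} into \eqref{vdw:i} with $p=p_0$ and the sign of the dispersion flipped, so the \emph{local} theory from Theorem~\ref{th:1} applies unchanged. The paper instead carries the complex conjugation along via the operator ${\mathcal C}$ inside the normal-form change of variable $M=-(m\circ{\mathcal C})J$, so your substitution is a cosmetic simplification of that step.

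The serious gap is in the extension to times $O(1)$. The conservation law and the assumption ${\mathcal E}(u_1^0,u_2^0)\le 0$ control only the \emph{second} component ($u_2$, or $v_2$, or $w_2$ after the normal-form/oscillation change of variables) in terms of the first: see \eqref{v2leqv1:p2:modified} and \eqref{w2leqw1:2}. They give nothing directly for $u_1$. The difficulty is that, once you pass to the reduced variable $w_1$, its evolution is the ODE \eqref{w1:w2}(i),
\[
\d_t w_1 = -\frac{i\l^2}{\e}\,e^{2it/\e}\,(\Id-\Pi_0)(w_1^3)+R_1,
\]
which has an $O(1/\e)$ prefactor because here $\l$ is $\e$-independent (amplitude $O(1)$), in contrast to Theorem~\ref{th:2}, where $\a=1/2$ makes the prefactor $\e^{2\a-1}=O(1)$. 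A direct Gr\"onwall/bootstrap estimate on this ODE only yields a time of existence $O(\e)$ for $w_1$, and your ``first equation of \eqref{modified} gives control on $\|u_1\|_{H^1}$'' step cannot close this, since $\d_t u_1=-\d_x\overline{u_2}$ requires $u_2\in H^1$, while the energy only controls $\|u_2\|_{L^2}$.

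The missing ingredient is the iterated integration by parts in time of Section~\ref{sec:ibp:modified}. Using the rapid oscillation $\mu=-i\l^2 e^{2it/\e}$, each integration by parts replaces the $O(1/\e)$ integral term by a bounded boundary term plus an $O(1/\e)$ integral whose prefactor carries an additional factor of $\mu$, i.e.\ of $\l^2$; after $n$ steps one reaches \eqref{implicit:n} with the singular term carrying $(c\l)^{2(n+1)}/\e$. Choosing $n$ so that $(2n+1)(c\l)^{2(n+1)}/\e\le 1$ (roughly $n\sim |\ln\e|/|\ln(c\l)|$) makes all terms in the implicit representation of $w_1$ non-singular in $\e$, and this is what finally permits an $O(1)$ existence time. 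Your constant $1/(c\sqrt2)$ enters here — it is what makes $(c\l)^2\le 1/2<1$ and keeps the geometric series in \eqref{bd:Pn} summable with a small enough constant — not as a threshold making a bootstrap on the conserved energy ``subcritical.'' Without this mechanism your argument cannot reach a time $T$ independent of $\e$.

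So your proposal gets the local step and the role of the energy in controlling $u_2$, but it omits the one idea that distinguishes Theorem~\ref{th:3} from Theorems~\ref{th:1} and~\ref{th:2}: the $O(|\ln\e|)$-fold integration by parts in time on the ODE for $w_1$.
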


Compared to the proof of Theorem \ref{th:2}, the extra ingredient here is a large, $O(|\ln \e|)$ number of integration by parts in time in the ordinary differential equation in the reduced system. See the discussion in Section \ref{sec:outline} just below.

\subsection{Outline of the proofs} \label{sec:outline}

 The proofs of all three theorems are based on a change of variable to normal form, which uses the Fourier multipler $m$ introduced in Section \ref{sec:m}.  The reduced system consists in a differential equation coupled to a semi-linear Schr\"odinger equation. For the reduced system an elementary application of the Banach fixed point theorem gives a solution in very short time.

Note that we do not need to appeal to Strichartz estimates \cite{Bourgain,BGT} for the Schr\"odinger operator in spite of the low regularity setting. This is due to the fact that the semilinear terms in the Schr\"odinger equation \eqref{eq:w2} all involve the operator $m$ applied to the unknown in that Schr\"odinger equation, where $m$ is the regularizing Fourier multiplier studied in Section \ref{sec:m}, and which in particular maps $L^2(\T)$ into $L^\infty(\T).$

 In the proofs of Theorems \ref{th:2} and \ref{th:3}, in addition to the reduction to normal form, we use the conservation of energy. For data with negative energy, the conservation of energy implies a control of the second unknown in the reduced system in terms of the first one. 
 
 In the proof of Theorem \ref{th:2}, this allows for a continuation of the solutions from a time $o(1)$ to a time $O(1).$ 
 
 In the proof of Theorem \ref{th:3}, we use the abovementioned arguments of reduction to normal form and conservation of energy, and in addition we exploit the fast oscillations in time. A sequence of $O(|\ln \e|)$ integration by parts leads to non-singular (with respect to $\e$) estimates for the ordinary differential equation, in spite of the original large $O(1/\e)$ prefactor in front of the non-linear term. Together with the conservation of energy, this leads to an existence time $O(1)$ for solutions with amplitude $O(1).$ 
 
 The condition $\max(\| u_1^0 \|_{H^1},\| u_2^0 \|_{L^2}) < 1/6$ is naturally not optimal, and the proofs of Theorems \ref{th:1} and \ref{th:2} can be adapted to the case $\max(\| u_1^0 \|_{H^1}, \| u_2^0 \|_{L^2}) < 1.$ In Theorem \ref{th:3}, the constraint $\dsp{\l \leq \frac{1}{c \sqrt 2}}$ is not optimal either. The proof of Theorem \ref{th:3} goes through for any $\l$ and any (zero-mean and negative energy) $(u_1^0, u_2^0)$ with 
 $\dsp{\Big(1 + \frac{(c \l)^2}{(1 - (c \l)^2)^2} \Big) \big( \| u_1^0\|_{H^1} + \|u_2^0\|_{L^2} \big) < 1.}$
Any odd non-linear pressure law works in our framework, in the sense that we could change $p_0$ into $p_0(x) = - u + u^{2n + 1},$ for any $n \geq 1,$ and similarly $p_1(x) = - u + |u|^{2n} u,$ $p_2(x) = - u + \bar u^{2 n + 1},$ and all three theorems would still hold.

\subsection{Background and references} {\it On dispersive regularization:}  
The concept of dispersive regularization, or stabilization, was introduced by G. M\'etivier and J. Rauch in \cite{MR}. In the case of high-frequency data, the results of \cite{MR} were extended first in \cite{TZ}, where a Nash-Moser scheme was used in the existence proof, then by P. Baldi and E. Haus in \cite{BH} and I. Ekeland and E. S\'er\'e in \cite{ES}.

 {\it On ill-posedness of non-hyperbolic initial-value problems:}  Non-hyperbolic initial-value problems are ill-posed in Sobolev spaces: see G. M\'etivier \cite{Met} for first-order fully nonlinear systems. Initial-value problems in which the principal symbol transitions away from hyperbolicity are also ill-posed: see N. Lerner, Y. Morimoto and C.-J. Xu \cite{LMX}, and the extension of their results to systems in \cite{LNT}. The results of \cite{Met} were extended in \cite{NT1,NT2} to systems transitioning away from hyperbolicity; and to Gevrey spaces by B. Morisse in \cite{Mor1,Mor2,Mor3}. Numerical evidence of the instabilites of \cite{LMX}, or lack thereof, was investigated in \cite{ST}.

\section{A Fourier multiplier} \label{sec:m}

We let $\T = \R/(2 \pi \Z),$ and we denote $H^s(\T)$ the standard Sobolev space of $\C$ or $\C^2$-valued maps defined over $\T,$ and $H^s_0(\T)$ the hyperplane of $H^s(\T)$ comprising maps with zero mean. We define the operator $m: L^2(\T) \to H^1_0(\T)$ by 
\be \label{def:m}
 m u := \sum_{ k \in \Z \setminus \{ 0 \} } \frac{e^{ i k x}}{ k} u_k, \qquad u_k := \frac{1}{2 \pi} \int_{\T} e^{- i k y} u(y) \, dy. 
\ee 
 We denote $\Pi_0$ the linear form 
$$ \Pi_0: \qquad u \in L^1(\T) \to u_0 = \int_\T u(x) dx,$$
with $u(x) \in \C$ or $\C^2,$ depending on the context.  

\begin{lem} \label{lem:m} The operator $m$ is linear and bounded from $H^s(\T)$ to $H^{s+1}_0(\T),$ for any $s \in \R,$  with
\be \label{est:m:L2}
 \| m u \|_{H^{s+1}} \leq \| u - \Pi_0 u \|_{H^{s}} \leq \| u \|_{H^s}.
\ee 
Besides,
\be \label{m:1} (m u)(x) = (m u)(0) + i \int_0^x (u(y) - \Pi_0 u) dy.\ee 
 In particular, the operator $m$ is bounded from $L^2(\T)$ to $L^\infty(\T):$ for some $C> 0,$ for all $u \in L^2(\T),$ 
\be \label{est:m:pointwise}
\| m u \|_{L^\infty} \leq C \| u - \Pi_0 u \|_{L^2} \leq C \| u \|_{L^2}, 
\ee 
and  
 \be \label{m:2} -i \d_x m = {\rm Id} - \Pi_0, \qquad - i m \d_x = {\rm Id} - \Pi_0,\ee
 so that
 \be \label{m:3}
  - i \d_x^2 m = \d_x, \quad - i m \d_x^2 = \d_x.
 \ee   
\end{lem}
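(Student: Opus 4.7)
The plan is to prove every assertion directly from the Fourier-series definition of $m$ in \eqref{def:m}, with all claims essentially reducing to term-by-term manipulations.

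First I would establish (1). Since the Fourier expansion of $mu$ contains no zero mode, $mu$ automatically lies in $H^{s+1}_0(\T)$. For the norm bound, with the standard convention $\|v\|_{H^s}^2 \sim |v_0|^2 + \sum_{k\neq 0} |k|^{2s}|v_k|^2$, I would write
\[
\|mu\|_{H^{s+1}}^2 = \sum_{k\neq 0} |k|^{2(s+1)} \cdot \frac{|u_k|^2}{k^2} = \sum_{k\neq 0} |k|^{2s} |u_k|^2 = \|u - \Pi_0 u\|_{H^s}^2,
\]
and then $\|u-\Pi_0 u\|_{H^s} \leq \|u\|_{H^s}$ is Parseval applied to the orthogonal decomposition $u = \Pi_0 u + (u - \Pi_0 u)$.

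For \eqref{m:1}, I would differentiate the Fourier series of $mu$ termwise (justified by the $H^{s+1}$ convergence from (1)) to obtain
\[
\d_x(mu) = i \sum_{k\neq 0} e^{ikx} u_k = i(u - \Pi_0 u),
\]
and then integrate from $0$ to $x$. Estimate \eqref{est:m:pointwise} is then a corollary: the integral term in \eqref{m:1} is bounded by $\sqrt{2\pi}\, \|u - \Pi_0 u\|_{L^2}$ by Cauchy--Schwarz, while $|mu(0)| \leq \bigl(\sum_{k\neq 0} k^{-2}\bigr)^{1/2}\bigl(\sum_{k\neq 0}|u_k|^2\bigr)^{1/2}$ bounds the initial constant using $\sum_{k\neq 0} k^{-2} = \pi^2/3$ and Parseval.

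The operator identities \eqref{m:2} are read off directly from the Fourier series: $-i\d_x (mu) = \sum_{k\neq 0} e^{ikx} u_k = (\mathrm{Id}-\Pi_0)u$, and symmetrically the factors $1/k$ and $ik$ cancel in $-i m(\d_x u)$. Finally \eqref{m:3} follows from \eqref{m:2}: applying $\d_x$ on the left of the first identity gives $-i\d_x^2 m = \d_x(\mathrm{Id}-\Pi_0) = \d_x$ since $\Pi_0 u$ is constant, and applying $\d_x$ on the right of the second gives $-i m \d_x^2 = (\mathrm{Id} - \Pi_0)\d_x = \d_x$ because $\Pi_0 \d_x = 0$ on periodic functions. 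No real obstacle is anticipated; the only mild subtlety is keeping track of the Fourier normalization constants so that the embedding constant $C$ in \eqref{est:m:pointwise} is explicit if needed.
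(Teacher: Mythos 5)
Your proof is correct and takes essentially the same route as the paper's: term-by-term Fourier manipulations, with the norm bounds read off from Parseval and the operator identities from the Fourier symbol. The paper derives \eqref{m:1} by integrating the series for $u-\Pi_0 u$ and recognizing $mu$, whereas you differentiate the series for $mu$ and integrate back, but this is the same computation run in the opposite direction; you also spell out the $H^{s+1}$ and $L^\infty$ bounds that the paper treats as implicit, which is a harmless (and slightly more complete) elaboration.
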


\begin{proof} The proof is elementary. By density of smooth maps in $H^s(\T),$ for all $s,$ we can work with a smooth map $u,$ for which we have for all $x \in \T$  %
$$ u(x) - \Pi_0 u= \sum_{k \neq 0} e^{i k x} u_k,$$ 
implying
$$ i \int_0^x (u(y) - \Pi_0 u) \, dy = i \sum_{k \neq 0} \frac{1}{ik} (e^{i k x} - 1) u_k,$$ so that 
$$ \begin{aligned} i \int_0^x (u(y) - \Pi_0 u) \, dy & = (m u)(x) - \sum_{k \neq 0} \frac{1}{k} u_k = (m u)(x) - (m u)(0),\end{aligned}$$
and this is \eqref{m:1}.   
Differentiating the above in $x,$ we find $\d_x m = i ({\rm Id} - \Pi_0).$ Besides,
$$ m (\d_x u)(x) = \sum_{k \neq 0} \frac{e^{ik x}}{k} (\d_x u)_k,$$
with $(\d_x u)_k = i k u_k,$ so that
$$ m (\d_x u)(x) =  i \sum_{k \neq 0} e^{i k x} u_k = i (u(x) - \Pi_0 u).$$ 
It then suffices to apply $\d_x$ to the left or to the right in order to find \eqref{m:3}. 
\end{proof}

\section{The initial-value problem for high-frequency data} \label{sec:proof:starts}

The proofs of Theorems \ref{th:1} and \ref{th:2} start here.

\subsection{The regularized systems}  By definition of the pressure laws \eqref{pressure:laws}, we have
$$ \begin{aligned} \d_x p_0(u) & = - \d_x u +3 u^2 \d_x u, \\ 
\d_x p_1(u) & = - \d_x u + 2 |u|^{2} \d_x u + u^2 \d_x \bar u,
\\ \d_x p_2(u) & = - \d_x u + 3 \bar u^{2} \d_x \bar u.\end{aligned}$$
We denote $q_j(u)$ the associated maps:
\be \label{def:q} \begin{aligned} 
q_0(u): & \quad v \in \C \rightarrow 3 u^2 v \in \C, \\ 
q_1(u): & \quad v  \in \C \rightarrow 2 |u|^{2} v + u^2 \bar v \in \C, \\
q_2(u): & \quad v \in \C \rightarrow 3 \bar u^{2} \bar v \in \C, 
\end{aligned}
\ee
so that
$$ %
\d_x (p(u)) = - \d_x u + q(u) \d_x u, \quad \mbox{with $(p,q) = (p_j,q_j),$ $j \in \{0,1,2\}.$}
$$ %
Note that in view of \eqref{def:q}, the operator $q_0(u)$ is a multiplication operator in $\C,$ but $q_1(u)$ and $q_2(u)$ involve the conjugation operator $z \to \bar z.$ 
The Euler-Van-der-Waals system \eqref{vdw} takes the form 
$$ %
\d_t u + A \d_x u + q(u_1) B \d_x u = 0, \qquad x \in \T,
$$ %
with  
$$ %
 A := \left(\begin{array}{cc} 0 & 1 \\ - 1 & 0 \end{array}\right),  \qquad B := \left(\begin{array}{cc} 0 & 0 \\ 1 & 0 \end{array}\right).
 $$ %
Crucially, $A$ is not symmetric. The quasi-linear term $q(u_1) B \d_x$ has a nilpotent structure and depends only on the first component of $U.$ 
For $\e > 0,$ and $U  = (u_1,u_2)\in \C^2,$ the regularized system \eqref{vdw:i} is 
\be \label{1}
\d_t u + A \d_x u + q(u_1) B \d_x u + \e i D \d_x^2 u = 0, \qquad D := \left(\begin{array}{cc} 0 & 0 \\ 0 & 1 \end{array}\right).
\ee

\subsection{Small high-frequency data} 

We consider concentrating data with amplitude $\e^\a,$ with $\a \geq 0:$ 
$$ %
u(0,x) = \e^\a u^0(x/\e^2),
$$ %
where $u^0$ is independent of $\e$ and belongs to $H^1(\T) \times L^2(\T).$ In Theorem \ref{th:1}, we have $\a = 0;$ in Theorem \ref{th:2} we have $\a = 1/2$ or $\a = 1/4,$ depending on the pressure law. %
We look for $u$ in the form 
$$ %
u(t,x) = \e^\a \tilde u(t,x/\e^2) = (\e^\a \tilde u_1, \e^\a \tilde u_2)(t,x/\e^2).
$$ %
Thus the initial-value problem takes the form 
\be \label{4} \left\{ \begin{aligned} 
\d_t \tilde u + \frac{1}{\e^2} (A  \d_x +  q(\e^\a \tilde u_1) B \d_x) \tilde u + \frac{i}{\e^3} D \d_x^2 \tilde u & = 0, \qquad t \geq 0, \\ \tilde u(0,x) & =  u^0(x),
\end{aligned}\right.\ee
with $x \in \T.$ 
The original Euler-Van der Waals system is a system of conservation laws. In particular, the mean mode of the solution to Euler-Van der Waals system ($\e = 0$) is conserved over time. Since $\Pi_0 \d_x^2 \equiv0,$ this conservation is true as well for the solution to \eqref{4}: 
\be \label{conserved}
 \Pi_0 \tilde u(t) \equiv\Pi_0 u^0,
\ee
so long as $\tilde u$ is defined. In the statements of Theorems  \ref{th:1} and \ref{th:2}, we assume that the datum has zero mean. Then the solution $\tilde u$ has zero mean so long as it is defined.

\section{Normal form reduction}  \label{sec:normal} 

We now let
\be \label{5}
 v = (\Id + \e M)^{-1} \tilde u, \qquad \mbox{with $M  = - m J,$ \quad $\dsp{J := \left(\begin{array}{cc} 0 & 1 \\ 1 & 0 \end{array}\right),}$}
\ee
with $m$ defined in \eqref{def:m}. In view of Lemma \ref{lem:m}, we have the bounds, for $0 < \e < 1:$ 
\be \label{bd:M:L2}
(1 - \e) \| v \|_{L^2} \leq  \| \tilde u \|_{L^2} \leq (1 + \e) \| v \|_{L^2},
\ee
and 
$$ %
(1 -\e C ) \| v \|_{L^\infty} \leq \| \tilde u \|_{L^\infty} \leq (1 + \e C ) \| v \|_{L^\infty},
$$ %
with the same positive constant $C$ as in the pointwise estimate \eqref{est:m:pointwise} in Lemma \ref{lem:m}. Since $m$ is a Fourier multiplier, it commutes with spatial derivatives, so that estimate \eqref{bd:M:L2} extends to Sobolev spaces:
\be \label{bd:M:Sobolev}
(1 - \e) \| v \|_{H^{s'}} \leq  \| \tilde u \|_{H^{s'}} \leq (1 + \e) \| v \|_{H^{s'}}, \qquad \mbox{for any $s'\in \R.$} 
\ee
We denote $(v_1, v_2)$ the components of $v.$ By conservation of the mean mode \eqref{conserved}, and assumption on the datum, we have 
$$ %
 \Pi_0 (\Id + \e M ) v \equiv0.
$$ %
Since $\Pi_0 m = 0$ (see Lemma \ref{lem:m}), this implies 
\be \label{mmc}
 \Pi_0 v_1  \equiv0, \qquad \Pi_0 v_2 \equiv0,
\ee
so long as $v_j$ are defined.

\subsection{Cancellation and the reduced system} \label{sec:normal:form} The system \eqref{4} in $\tilde u$ and the definition \eqref{5} of $v$ lead to 
\be \label{6} \begin{aligned} 
\d_t v & + \frac{i}{\e^3} D \d_x^2 v + \frac{1}{\e^2} (A  \d_x + [i D \d_x^2, M] + q(\e^\a \tilde u_1) B \d_x) v  \\ & + \frac{1}{\e^2} \big( (\Id + \e M)^{-1} - 1 \big)  \big[ i D \d_x^2, M \big] v  + \frac{1}{\e} (\Id + \e M)^{-1} \big[ A \d_x + q(\e^\a \tilde u_1) B \d_x, M \big] v = 0.
\end{aligned} \ee
By \eqref{m:3} in Lemma \ref{lem:m}, with the definition of $M$ in \eqref{5}, we observe the key cancellation 
$$ %
[i D \d_x^2, M] + A \d_x = - i D J \d_x^2 m + i J D m \d_x^2 + A \d_x = - [J,D] \d_x +  A \d_x = 0.
$$ %
The equation \eqref{6} becomes
\be \label{7} \begin{aligned}
\d_t v & + \frac{i}{\e^3} D \d_x^2 v + \frac{1}{\e^2} q(\e^\a \tilde u_1) B \d_x v  - \frac{1}{\e^2} \big( (\Id + \e M)^{-1} - 1 \big) A \d_x v\\ & + \frac{1}{\e} (\Id + \e M)^{-1} \big[ A \d_x + q(\e^\a \tilde u_1) B \d_x, M \big] v = 0.
\end{aligned} \ee   
By expanding in Neumann series, we observe that 
\be \label{neumann}
 (\Id + \e M)^{-1} - \Id = - \e (\Id  + \e M)^{-1} M,
\ee
so that \eqref{7} becomes
\be \label{7.1} \begin{aligned}
\d_t v + \frac{i}{\e^3} D \d_x^2 v & +  \frac{1}{\e^2} q(\e^\a \tilde u_1) B \d_x v \\ & + \frac{1}{\e} (\Id + \e M)^{-1} \Big( M A \d_x + [A \d_x + q(\e^\a \tilde u_1) B \d_x, M] \Big)  v  = 0.
\end{aligned} \ee 
The $O(\e^{-1})$ operator in \eqref{7.1} is 
\be \label{for:E}  M A \d_x + [A \d_x + q(\e^\a \tilde u_1) B \d_x, M]  = A \d_x M + [ q(\e^\a \tilde u_1) B \d_x, M].\ee
We observe that, by Lemma \ref{lem:m},  
\be \label{10} 
A \d_x M = -i A J (\Id - \Pi_0), \qquad A J  = \left(\begin{array}{cc} 1 & 0 \\ 0 & -1 \end{array}\right).
\ee
Using \eqref{m:2}, we see that the other term in \eqref{for:E} is  
$$ %
[ q(\e^\a \tilde u_1) B \d_x, M ] = -  i q(\e^\a \tilde u_1) B J ({\Id} - \Pi_0) + m q(\e^\a \tilde u_1) \d_x J B.
$$ %
We compute
$$ B J = \left(\begin{array}{cc} 0 & 0 \\ 0 & 1 \end{array}\right), \qquad J B = \left(\begin{array}{cc} 1 & 0 \\ 0 & 0 \end{array}\right),$$
so that
$$ %
[ q(\e^\a \tilde u_1) B \d_x, M ] = \left(\begin{array}{cc} m \circ q(\e^\a \tilde u_1) \d_x & 0 \\ 0 & - i q(\e^\a \tilde u_1) (\Id - \Pi_0) \end{array}\right).
$$ %
We introduce the remainder $r_1$ such that  
\be \label{def:r1} m (q(\e^\a \tilde u_1) \d_x v_1) = m( q(\e^\a v_1) \d_x v_1) + \e r_1,\ee
and, with \eqref{pressure:laws}, we note that  
$$ q(\e^\a v_1) \d_x v_1 = \e^{2 \a} \d_x (p(v_1) + v_1).$$ 
Thus
$$  m( q(\e^\a v_1) \d_x v_1) =  \e^{2 \a} m \d_x (p(v_1) +v_1) = i \e^{2 \a} (\Id - \Pi_0)(p(v_1) + v_1).$$ %
Together with \eqref{10} and \eqref{mmc}, this gives a description of the $O(\e^{-1})$ operator in \eqref{7.1}: we let  
\be \label{def:E}
E v := \left(\begin{array}{c} - i v_1 + i \e^{2 \a} (\Id - \Pi_0) (p(v_1) + v_1) \\ i (1 - q(\e^\a \tilde u_1))  v_2 \end{array}\right).
\ee
The reduced system is 
\be \label{reduced:system}
 \d_t v + \frac{i}{\e^3} D \d_x^2 v  +  \frac{1}{\e^2} q(\e^\a \tilde u_1) B \d_x v + \frac{1}{\e} E  v + R v = 0,
 \ee
 with $E$ defined in \eqref{def:E}, and the remainder $R$ is    
 \be \label{def:R}
 R v := \tilde R v + \left(\begin{array}{c} r_1 \\ 0 \end{array}\right),
 \ee
 where $r_1$ is defined in \eqref{def:r1}, and, using \eqref{neumann} again,
 \be \label{id:R}  \begin{aligned}
 \tilde R := - (\Id + \e M)^{-1} M  E.
\end{aligned}\ee 
 The system \eqref{reduced:system} is partially decoupled. The equation in $v_1$ is a non-linear ordinary differential equation: 
\be \label{eq:checku1}
 \d_t v_1 - i \e^{-1} v_1 + i \e^{2 \a - 1} (\Id - \Pi_0) (p(v_1) + v_1)  = - (R v)_1 ,
 \ee
and the equation in $v_2$ is a semi-linear Schr\"odinger equation: 
\be \label{eq:checku2} \begin{aligned} 
 \d_t v_2 + \frac{i}{\e^3} \d_x^2 v_2 + \frac{i}{\e} (1 - q(\e^\a \tilde u_1)) v_2 & = - \frac{1}{\e^2} q(\e^\a \tilde u_1) \d_x v_1 - (R v)_2.
 \end{aligned}
 \ee 
In \eqref{eq:checku1}-\eqref{eq:checku2} we took into account the fact that $\Pi_0 v_j \equiv0,$ as described in \eqref{mmc}. 

The components $(R v_1)$ and $(R v)_2$ of the remainder $R v$ that appear in \eqref{eq:checku1} and \eqref{eq:checku2} are defined in \eqref{def:R}, in terms of $r_1$ defined in \eqref{def:r1} and $\tilde R$ defined in \eqref{id:R}. We have
$$ %
 (R v)_1  = -  ((\Id + \e M)^{-1} M E v)_1 + r_1, \qquad 
 (R v)_2  = - ((\Id + \e M)^{-1} M E v)_2,   
$$ %

\subsection{Time oscillations} \label{sec:time}

 At this point we factorize the fast time oscillations as we let
\be \label{def:tildeu} 
  w_1(t,x) := e^{- i t/\e} v_1(t,x), \qquad 
   w_2(t,x)  := e^{i t/\e} v_2(t,x).
\ee

From \eqref{eq:checku1} and \eqref{def:tildeu}, we deduce the equation in $w_1.$ We have
\be \label{eq:w1:0}
\d_t w_1 = - i \e^{2 \a - 1} e^{2 i t/\e} (\Id - \Pi_0)(w_1^3) - R_1, \qquad \mbox{if $p = p_0;$}
 \ee
\be \label{eq:w1:1}
 \d_t w_1 = - i \e^{2 \a - 1} (\Id - \Pi_0)( |w_1|^{2} w_1) - R_1, \qquad \mbox{if $p = p_1;$}
 \ee
 and
 \be \label{eq:w1:2}
 \d_t w_1 = - i \e^{2 \a - 1} e^{ - 4 i t/\e} (\Id - \Pi_0) \bar w_1^{3} - R_1, \qquad \mbox{if $p = p_2,$}
 \ee
with
\be \label{def:R1}
 R_1 := e^{- i t/\e}  (R v)_1,
\ee
in all three cases. Note the fast time oscillations in factor of the nonlinear term in \eqref{eq:w1:2}, a key difference with \eqref{eq:w1:1}.

From \eqref{eq:checku2} and \eqref{def:tildeu}, we deduce the equation in $w_2:$
\be \label{eq:w2} \begin{aligned} 
 \d_t w_2 + \frac{i}{\e^3} \d_x^2 w_2  & = - \frac{1}{\e^2} e^{i t/\e} q(\e^\a \tilde u_1) \big( e^{it /\e} \d_x w_1 \big) + \frac{i}{\e} e^{it /\e}  q(\e^\a \tilde u_1) \big( e^{- i t /\e} w_2 \big) -  R_2,
\end{aligned}
\ee
with $R_2 := e^{i t/\e} ( R v)_2.$
In the rest of the proofs, we work with \eqref{eq:w1:0}-\eqref{eq:w1:1}-\eqref{eq:w1:2}-\eqref{eq:w2}: an ordinary differential equation in $w_1$ coupled to a semilinear Schr\"odinger equation in $w_2.$

\section{End of proof of Theorem \ref{th:1}}  \label{sec:end:1} 

Here the pressure law is $p = p_0$ \eqref{pressure:laws}. We start from \eqref{eq:w1:0}-\eqref{eq:w2}, with $\a = 0.$ We use a simple fixed point argument. Consider the balls 
\be \label{def:balls} B_s(\rho,T) = \{ u \in C^0([ 0, T], H^s(\T)), \quad \| u\|_{L^\infty H^s} \leq \rho \}, \qquad 0 < \rho < 1, \quad s \in \R.\ee
 Product spaces are equipped with the $L^\infty$ norm; in particular we have $\| (f,g)\|_{H^1 \times L^2} = \max( \|f \|_{H^1}, \|g \|_{L^2}).$ Given $(w_1^0, w_2^0) \in B_1(\rho/6,T) \times B_0(\rho/6, T),$ we let
 \be \label{def:F}
 F: B_1(\rho,T) \times B_0(\rho,T) \to C^0([0,T], H^1(\T) \times L^2(\T)),
 \ee
 be defined by $F = (F_1,F_2),$ with  
 \be \label{def:F1:1} 
F_1(w_1,w_2)(t) := w_1^0 - i \e^{- 1} \int_0^t e^{2 i t'/\e} (\Id - \Pi_0)(w_1^3)(t') \, dt'- \int_0^t R_1(t') \, dt', %
\ee
and
\be \label{def:F2} \begin{aligned}
 F_2(w_1,w_2)(t)  & := e^{- i t \d_x^2/\e^3} w_2^0 - \int_0^t e^{ - i (t - t') \d_x^2/\e^3} \big( f_2(t',w_1(t'), w_2(t')) +  R_2(t')\big) \, dt',
\end{aligned}
\ee 
with notation
$$ f_2(t,w_1(t), w_2(t)) := \e^{-2} e^{2 i t'/\e} q_0(\tilde u_1) \d_x w_1(t)  - \e^{-1} i  q_0(\tilde u_1) w_2(t),$$
where $q_0$ is defined in \eqref{def:q}. 
A fixed point of $F$ is a solution to \eqref{eq:w1:0}-\eqref{eq:w2} issued from $(w_1^0, w_2^0)$ in time $[0,T].$ Note that the remainders $R_1$ and $R_2$ which intervene in \eqref{def:F1:1} and \eqref{def:F2} are defined in terms of $\tilde u_1,$ which, via \eqref{5} and \eqref{def:tildeu}, is expressed in terms of $w_1$ and $w_2.$

\subsection{Remainder bounds} \label{sec:remainder}

We derive bounds for $R_1$ defined in \eqref{def:R1} in term of $\tilde R$ and $r_1.$
 The remainder $\tilde R$ is described in \eqref{id:R}, and we can use Lemma \ref{lem:m} for $m.$   
By \eqref{est:m:L2} and \eqref{bd:M:Sobolev}: 
$$ \| \tilde R v \|_{H^s} \lesssim \| M E v \|_{H^s} \leq \| E v \|_{H^{s-1}}.$$ 
The operator $E$ is defined in \eqref{def:E}. Using \eqref{def:q}, we see that it satisfies 
$$ \begin{aligned} \| (E v)_1 \|_{L^2}  \lesssim \big(1 + \|v_1\|_{L^\infty}^{2} \big) \| v_1 \|_{L^2}, \qquad 
 \| (E v)_2 \|_{L^2}  \lesssim \big(1 + \| \tilde u_1 \|_{L^\infty}^{2}\big) \| v_2 \|_{L^2}.
 \end{aligned}$$
By \eqref{est:m:pointwise} and \eqref{5}, we have
\be \label{bd:tildeu1:Linfty} \|  \tilde u_1 \|_{L^\infty} \lesssim \| v_1 \|_{L^\infty} + \e \| v_2 \|_{L^2}.\ee
We turn to $r_1$ defined in \eqref{def:r1}. In view of \eqref{est:m:L2}, we have 
$$ \begin{aligned} \| r_1 \|_{H^1} = \e^{-1} \| m \Big( \, \big( q_0(\tilde u_1) - q_0( v_1) \, \big) \d_x v_1 \, \Big) \|_{H^1}  \lesssim \e^{-1} \| \big( q_0( \tilde u_1) - q_0( v_1)\big) \d_x v_1  \|_{L^2}. \end{aligned}$$ 
By \eqref{def:q}, 
$$  \| \big( q_0( \tilde u_1) - q_0( v_1) \big) \d_x v_1  \|_{L^2} \lesssim C_{q_0}(\| \tilde u_1 \|_{L^\infty} + \| v_1 \|_{L^\infty}) \| \tilde u_1 - v_1 \|_{L^\infty} \| \d_x v_1 \|_{L^2},$$
for some nonegative and nondecreasing function $C_{q_0}.$ Since $\tilde u_1 = v_1 - \e m v_2,$ with \eqref{est:m:pointwise} this gives 
$$ %
 \| r_1 \|_{H^1} \lesssim C_{q_0}(\| v_1 \|_{L^\infty} + \e \| v_2 \|_{L^2}) \| v_2 \|_{L^2} \| \d_x v_1 \|_{L^2}.
$$ %
 Summing up, we obtain 
$$ %
 \| R_1 \|_{H^1} \lesssim \| v\|_{L^2} + C_{q_0}\big(\| v_1 \|_{L^\infty} + \e \| v_2 \|_{L^2}\big) \big( \| v \|_{L^2} + \| v_2 \|_{L^2} \| \d_x v_1 \|_{L^2}\big), 
$$ %
 with another nonegative and nondecreasing function $C_{q_0}$ which depends on $q_0.$ The remainder $R_2$ defined in \eqref{eq:w2} satisfies the same bound in $L^2$ norm. 
 For $(w_1, w_2) \in B_1(\rho, T) \times B_0(\rho, T)$ with $\rho < 1,$ 
 this implies for $\e < 1$ the bound 
 \be \label{remainder:bound}
 \| R_1 \|_{H^1} + \| R_2 \|_{L^2} \lesssim C_R \rho,
 \ee 
 uniformly in $t \in [0,T],$ for some constant $C_R > 0$ which is uniform in $\rho < 1.$ %

\subsection{Bounds on $F$} As $p = p_0,$ $F_1$ is defined by \eqref{def:F1:1}. We note that, for $w_1 \in B_1(\rho, T),$
$$ \|  w_1^3 \|_{H^1} \leq 3 \| w_1 \|_{L^\infty}^{2} \| w_1 \|_{H^1}.$$
Thus, given $(w_1, w_2) \in B_1(\rho, T) \times B_0(\rho, T)$ with $\rho < 1,$ 
$$ \| F_1 \|_{H^1} \leq \| w_1^0\|_{H^1} +  3 \e^{ - 1}  T \| w_1 \|_{L^\infty([0, T], L^\infty)}^{2} \| w_1 \|_{L^\infty([0, T], H^1)} + T \| R_1 \|_{L^\infty([0, T], H^1)}.$$
With the remainder bound \eqref{remainder:bound}, this gives 
\be \label{bd:F1:1} \| F_1 \|_{H^1} \leq \| w_1^0\|_{H^1} + 3 \e^{- 1} c^{2} \rho^{3} T + C_R \rho T,\ee
where $c > 0$ is a norm for the Sobolev embedding $H^1(\T) \hookrightarrow L^\infty(\T),$ and $C_R > 0$ is the constant that appears in the remainder bound \eqref{remainder:bound}.
We turn to $F_2:$ by definition of $q_0$ \eqref{def:q}, we have
$$ \begin{aligned} \| f_2 \|_{L^2} & \leq 3 \| \tilde u_1 \|_{L^\infty}^{2} \big( \e^{-2}  \| \d_x v_1 \|_{L^2} + \e^{-1} \| v_2 \|_{L^2}\big). 
\end{aligned} $$    
With \eqref{bd:tildeu1:Linfty}, for $(w_1, w_2) \in B_1(\rho,T) \times B_0(\rho,T),$ this implies
$$ \| f_2 \|_{L^2} \leq 3 (c + \e)^{2} \rho^{2} (\e^{-2} + \e^{-1}) \rho.$$ 
The Schr\"odinger solution operator $e^{i s \d_x^2}$ being unitary in $L^2(\T)$ for $s \in \R,$ we have  
$$ \| F_2(t,w_1(t), w_2(t)) \|_{L^2} \leq \| w_2^0\|_{L^2} + \int_0^t \big( \| f_2(t', w_1(t'), w_2(t')) \|_{L^2} + \| R_2(t') \|_{L^2} \big) \, dt'.$$ 
With the above bound for $f_2,$ and the remainder bound \eqref{remainder:bound}, this gives for $\e < \min(1,c)$ the bound 
\be \label{bd:F2} \| F_2 \|_{L^2} \leq \| w_2^0 \|_{L^2} + \e^{-2} C \rho^{3} T + C_R \rho T,
\ee
for some constant $C > 0$ which depends only on $c.$ 
 Thus for $T = T_\e$ small enough, depending on $\e$ and $\rho,$ and $\e$ small enough, recalling that $\| w_1^0 \|_{H^1} \leq \rho/6$ and $\| w_2^0 \|_{L^2} \leq \rho/6,$ we have
\be \label{bd:F} \| F(t,w_1(t), w_2(t)) \|_{H^1 \times L^2} \leq \rho, \quad \mbox{for $0 \leq t \leq T_\e,$}\ee
meaning that $F$ maps $B_1(\rho,T_\e) \times B_2(\rho,T_\e)$ to itself. 
How small does $T_\e$ need to be? We see on \eqref{bd:F1:1} and \eqref{bd:F2} that the worst term in the upper bounds is the term $\e^{-2} C \rho^{2}$ in the upper bound for $F_2.$ All the other terms are bounded in $\e$ as $\e \to 0.$ So if we let 
$$ %
 T_\e := \frac{\e^{2}}{2 C \rho^{2}},
$$ %
 where $C$ is the constant which appears in \eqref{bd:F2}, then for $\e$ small enough (depending on $\rho$), we have \eqref{bd:F}. 

\subsection{Contraction bounds} \label{sec:contraction} Given two pairs $(w_1, w_2)$ and $(w'_1, w'_2)$ in $B_1(\rho,T_\e) \times B_0(\rho, T_\e),$ the goal is to bound 
$$ F - F' :=  F(t,w_1(t), w_2(t)) - F(t, w_1'(t), w_2'(t))$$
in $H^1 \times L^2$ norm, uniformly in $t \in [0, T_\e].$ We have
$$ \|  w_1^3 - ( w'_1)^3 \|_{H^1} \leq C_{\rho} \| w_1 - w'_1 \|_{H^1}.$$
for some $C_\rho > 0.$ 
The difference between the remainders $\tilde R$ (defined in \eqref{id:R}) and $\tilde R'$ (same as $\tilde R$ but evaluated at $\tilde u'_1$) satisfies  
$$ \| \tilde R v - \tilde R' v' \|_{H^1} \lesssim \| E v - E' v' \|_{L^2}.$$
Besides,
$$ \| (E v)_1 - (E' v')_1 \|_{L^2} \leq (1  +  C_{\rho}) \| v_1 - v'_1 \|_{L^2},$$
and
$$ \| (E v)_2 - (E' v')_2 \|_{L^2} \leq (1  + C_{\rho}) \| v_2 - v'_2 \|_{L^2} +  C_{\rho} \| v_1 - v'_1 \|_{L^\infty}.$$
Similarly,
$$ \| r_1 - r'_1 \|_{H^1} \leq C_{\rho} (\| v_2 - v'_2 \|_{L^2} + \| \d_x v_1 - \d_x v'_1 \|_{L^2}).$$
Thus from \eqref{def:F1:1} we deduce
$$ \| F_1 - F'_1 \|_{H^1} \leq T_\e C_{\rho} \max\big( \| w_1 - w'_1 \|_{L^\infty([0, T_\e], H^1)}, \| w_2 - w'_2 \|_{L^\infty([0, T_\e], L^2)}\big).$$
The bound for $F_2 - F'_2$ is similar. From there, we deduce that if $\e$ is small enough, depending on $\rho,$ then $F$ is a contraction in $B_1(\rho, T_\e) \times B_0(\rho, T_\e).$ The Banach fixed point theorem then asserts the existence of a unique fixed point for $F$ in $B_1(\rho,T_\e) \times B_0(\rho,T_\e).$ This fixed point is a solution to the initial-value problem for $(w_1, w_2)$ with $(w_1(0), w_2(0)) = (w_1^0, w_2^0).$ A standard continuation argument extends the uniqueness  from the product of balls into the whole space $C^0([0, T_\e], H^1 \times L^2).$

\section{End of proof of Theorem \ref{th:2}}

Here $p = p_1$ and $\a = 1/2,$ or $p = p_2$ and $\a = 1/4.$ We work with \eqref{eq:w1:1}-\eqref{eq:w2} if $p = p_1,$ and with \eqref{eq:w1:2}-\eqref{eq:w2} if $p = p_2.$

\subsection{The conservation law in the rescaled spatial frame} 

For $1 \leq p < \infty,$ given $f \in L^p(\T),$ and $g(x) = \e^\a f(x/\e^2),$ we have 
$$ \| g \|^p_{L^p(\T)} =  \e^{\a p + 2}  \| f \|^p_{L^p}.$$
In particular, for the putative solution $(\tilde u_1,\tilde u_2)$ of \eqref{4}, the energy is 
 \be \label{energy:1} {\mathcal E}_1(t) := \frac{\e^{4 \a + 2}}{4} \int_\T |u_1|^{4}  \, dx  -  \frac{\e^{2\a + 2}}{2} \| \tilde u_1 \|^2_{L^2} + \frac{\e^{2\a + 2}}{2} \| \tilde u_2 \|_{L^2}^2 \equiv {\mathcal E}_1(0) \leq 0,\ee
by assumption \eqref{negative:energy}, if $p = p_1$ \eqref{pressure:laws}, implying
$$ %
 \| \tilde u_2(t) \|_{L^2} \leq  \| \tilde u_1(t)\|_{L^2}, \qquad \mbox{with $p = p_1.$}
$$ %
 If $p = p_2$ \eqref{pressure:laws}, we find
 $$ 
\label{energy:2} {\mathcal E}_2(t) := \frac{\e^{4 \a + 2}}{4} \int_\T \Re e \, u_1^{4}  \, dx  -  \frac{\e^{2\a + 2}}{2} \| \tilde u_1 \|^2_{L^2} + \frac{\e^{2\a + 2}}{2} \| \tilde u_2 \|_{L^2}^2 \equiv {\mathcal E}_2(0) \leq 0.
$$ 
This implies 
\be \label{v2leqv1:p2}
 \| \tilde u_2(t) \|_{L^2}^2 \leq \| \tilde u_1(t)\|_{L^2}^2 - \frac{\e^{2 \a}}{4} \int_\T \Re e \, \tilde u_1^{4}(t,x)  \, dx, \qquad \mbox{with $p = p_2.$}
\ee

\subsection{Conservation of the mean mode and energy} \label{sec:cons:check}

In the case $p = p_1,$ from \eqref{energy:1} and the definition of $v$ in \eqref{5}, we deduce 
$$ %
\| v_2 - \e m v_1 \|_{L^2} \leq \| v_1 - \e m v_2 \|_{L^2}. 
$$ %
This implies
  \be \label{w2leqw1}
  \| v_2 \|_{L^2} \leq \frac{1 + \e}{1 - \e} \| v_1 \|_{L^2}, \qquad \mbox{if $p = p_1.$} 
 \ee
 so long as the solution $(v_1, v_2)$ to \eqref{eq:checku1}-\eqref{eq:checku2} is defined. 

In the case $p = p_2,$ using \eqref{est:m:pointwise} we find, starting from \eqref{v2leqv1:p2}, the bound 
$$ \| v_2 - \e m v_1 \|_{L^2}^2 \leq \big(1 + \e^{2 \a} \big(\| v_1 \|_{L^\infty} + \e \| v_2 \|_{L^2}\big)^{2} \, \big) \big(\| v_1 \|_{L^2}^2 + 2 \e \| v_1 \|_{L^2} \| v_2 \|_{L^2} + \e^2 \| v_2 \|_{L^2}^2 \big). $$ 
This implies
\be \label{w2leqw1:2}
\| v_2 \|_{L^2}^2 \leq \frac{ 1 + \e^{2\a} \big(\| v_1 \|_{L^\infty} + \e \| v_2 \|_{L^2}\big)^{2} }{1 - 4 \e  \big(1 + \e^{2\a}\big(\| v_1 \|_{L^\infty} + \e \| v_2 \|_{L^2}\big)^{2} \big) }  \cdot  (1 + 5 \e) \| v_1 \|_{L^2}^2,
\ee
for $\e < 1.$

\subsection{An integration by parts in time} \label{IBP}
If $p = p_2,$ then based on \eqref{eq:w1:2} we have the implicit representation
\be \label{w1:implicit}
 w_1(t) = w_1(0) - i \e^{2 \a - 1} \int_0^t e^{ - 4 i  t'/\e} (\Id - \Pi_0) \bar w_1^{3}(t') \, dt' - \int_0^t R_1(t') \, dt'.
 \ee
 The fast time oscillations in the nonlinear term allows for an integration by parts and the gain of $\e^{2  n\a}.$ Indeed, we have 
$$ \begin{aligned}
\int_0^t e^{ - 4 i  t'/\e} & (\Id - \Pi_0) \bar w_1^{3}(t') \, dt'
\\ & = \frac{i \e}{4} \Big( e^{ - 4 i  t/\e}  (\Id - \Pi_0) \bar w_1^{3}(t) - (\Id - \Pi_0) \bar w_1^{3}(0)\Big) \\ & -  i \e \int_0^t  e^{ - 4 i t'/\e} (\Id - \Pi_0) \bar w_1^{2}(t') \d_t \bar w_1(t') \, dt'.
\end{aligned}$$
Using \eqref{eq:w1:2} again, and \eqref{w1:implicit}, this gives
\be \label{w1:implicit:2}
\begin{aligned}
w_1(t) & = w_1(0) + \frac{\e^{2 \a}}{4} \Big( e^{ - 4 i  t/\e}  (\Id - \Pi_0) \bar w_1^{3}(t) - (\Id - \Pi_0) \bar w_1^{3}(0)\Big)  \\ & - i \e^{4 \a - 1} \int_0^t (\Id - \Pi_0) \big( \bar w_1^{2} (t') (\Id - \Pi_0) w_1^{3} (t') \big) \, dt' \\ 
& - \int_0^t \big( R_1(t') + \e^{2 \a} e^{- 4i t'/\e} (\Id - \Pi_0) (\bar w_1^{2}(t') R_1(t') \big) \, dt'. \end{aligned}\ee
Notice that there is no room for a further integration by parts as there are no more fast
oscillations in time in the leading term in \eqref{w1:implicit:2}.

\subsection{Short-time existence} \label{sec:short-time}

This step relies on a standard fixed point argument, as in Theorem \ref{th:1}. We only sketch the argument and the bounds here. As in Section \ref{sec:end:1}, we use the balls \eqref{def:balls} and define a map $F$ \eqref{def:F}. The first component $F_1$ of $F$ is
$$ %
F_1(w_1,w_2)(t) := w_1^0 - i \e^{2 \a - 1} \int_0^t (\Id - \Pi_0)( |w_1|^{2} w_1)(t') \, dt'- \int_0^t R_1(t') \, dt', \qquad \mbox{if $p = p_1;$}
$$ %
and
$$ %
 \begin{aligned}
 F_1(w_1,w_2)(t) & :=w_1^0 + \frac{\e^{2 \a}}{4} \Big( e^{ - 4 i  t/\e}  (\Id - \Pi_0) \bar w_1(t)^{3} - (\Id - \Pi_0) (\bar w_1^0)^{3}\Big)  \\ & - i \e^{4 \a - 1} \int_0^t (\Id - \Pi_0) \big( \bar w_1(t')^{2} (\Id - \Pi_0) w_1(t')^{3} \big) \, dt' \\ 
& - \int_0^t \big( R_1(t') + \e^{2 \a} e^{- 4 i t'/\e} (\Id - \Pi_0) (\bar w_1(t')^{2} R_1(t') \big) \, dt',  \qquad \mbox{if $p = p_2,$}
\end{aligned}
$$ %
in accordance with \eqref{w1:implicit:2}. 
 The second component of $F$ is defined by  
$$ %
 \begin{aligned}
 F_2(w_1,w_2)(t)  & := e^{- i t \d_x^2/\e^3} w_2^0 - \int_0^t e^{ - i (t - t') \d_x^2/\e^3} \big( f_2(t',w_1(t'), w_2(t')) +  R_2(t')\big) \, dt',
\end{aligned}
$$ %
with notation
$$ 
f_2(t,w_1, w_2) := - \frac{1}{\e^2} e^{i t/\e} q(\e^\a \tilde u_1) \big( e^{it /\e} \d_x w_1 \big) - \frac{i}{\e} e^{it /\e}  q(\e^\a \tilde u_1) \big( e^{- i t /\e} w_2 \big),
$$
with $q = q_1$ or $q = q_2$ \eqref{def:q}. 

In the case $p = p_1,$ bounds very similar to the ones from Section \ref{sec:end:1} yield the estimate 
\be \label{bd:F1:1:th2} \| F_1 \|_{H^1} \leq \| w_1^0\|_{H^1} + 3 \e^{2 \a - 1} c^{2} \rho^{3} T + C_R \rho T,\ee
where $c > 0$ is a norm for the Sobolev embedding $H^1(\T) \hookrightarrow L^\infty(\T),$ and $C_R > 0$ is the constant that appears in the remainder bound. 
If $p = p_2,$ we have similarly 
\be \label{bd:F1:2:th2} \begin{aligned} \| F_1 \|_{H^1}  \leq \| w_1^0\|_{H^1} + \e^{2 \a} \| w_1^0\|_{L^\infty}^{2} \| w_1^0 \|_{H^1} & + \e^{2 \a} c^{2} \rho^{3} + 5 \e^{4 \a - 1} c^{4} \rho^{5} T  + C_R \rho T \big(1 + \e^{2 \a} c \rho^{2}\big).\end{aligned}\ee 
In both cases, we have for $\e < c$ the bound 
\be \label{bd:F2:th2} \| F_2 \|_{L^2} \leq \| w_2^0 \|_{L^2} + \e^{-2 + 2 \a} C_0 \rho^{3} T + C_R \rho T,
\ee
where $C_0 > 0$ depends only on the embedding constant $c.$ Thus for
\be \label{def:Te:th2}
 T_\e := \frac{\e^{2(1 - \a)}}{2 C_0 \rho^{2}},
 \ee
 where $C_0$ is the constant which appears in \eqref{bd:F2:th2}, then for $\e$ small enough (depending on $\rho$), the map $F$ maps the product of balls $B_1(\rho, T_\e) \times B_0(\rho, T_\e)$ into itself. The contraction bounds are similar to the ones of Section \ref{sec:contraction}, and this gives existence and uniqueness in $H^1 \times L^2$ over the small time interval $[0, T_\e].$  
 
 {\it In the rest of the proof of Theorem {\rm \ref{th:2}}, we use the conservation of energy in order to extend the solutions from $[0, T_\e]$ up to time intervals of length $O(1)$ with respect to $\e.$}

\subsection{An improved bound for $w_2$ with the conservation of energy} \label{sec:w2} We have $w_2(t) = F_2(t, w_1(t), w_2(t))$ so that $w_2(t)$ is bounded as in \eqref{bd:F2:th2}. The key is that by the conservation of energy we have a much better bound for $w_2,$ namely 
\be \label{improved:w2:1} \| w_2(t)\|_{L^2} \leq \frac{1 + \e}{1 - \e} \| w_1(t) \|_{L^2}, \qquad \mbox{for all $t \in [0, T_\e],$ if $p = p_1.$}\ee
which comes from \eqref{w2leqw1}. In the case $p = p_2,$ from \eqref{w2leqw1:2} and the fact that $(w_1, w_2) \in B_1(\rho, T_\e) \times B_0(\rho, T_\e)$ with $\rho < 1,$ we deduce
\be \label{improved:w2:2} 
\| w_2 (t) \|_{L^2} \leq (1 + \e^{2\a} C) \| w_1 (t) \|_{L^2}, \qquad \mbox{for all $t \in [0, T_\e],$ if $p = p_2,$}\ee 
for some $C > 0$ which depends only on the embedding constant $c.$ 

\subsection{A bound for $w_1$}  \label{sec:w1} 
For $w_1$ in the case $p = p_1,$ we use the bound \eqref{bd:F1:1:th2}, which implies
\be \label{bd:w1:1}
 \| w_1(t) \|_{H^1} \leq \frac{\rho}{6} + (1 + \e^{2 \a - 1}) C \rho t,\ee
and if $p = p_2,$ we use \eqref{bd:F1:2:th2}, which implies
\be \label{bd:w1:2} \begin{aligned}
\| w_1(t) \|_{H^1}  \leq \frac{\rho}{6} + \e^{2 \a} C \rho^3 + (1 + \e^{4 \a - 1}) C \rho t.\end{aligned}\ee 

\subsection{Continuation: a first step} In the case $p = p_1,$ we let $\a = 1/2,$ and in the case $p = p_2,$ we let $\a = 1/4.$ Then, by \eqref{improved:w2:1}-\eqref{improved:w2:2} and \eqref{bd:w1:1}-\eqref{bd:w1:2}, we obtain for $\e$ small enough, depending on $\rho,$ the bound 
$$ \begin{aligned} \max( \| w_1(T_\e) \|_{H^1}, \| w_2(T_\e)\|_{L^2}) & \leq \frac{\rho}{5} + 2 C \rho T_\e =: \frac{\rho'}{6}.
\end{aligned}$$ 
Associated with the new ``initial" radius $\rho',$ by the arguments of Section \ref{sec:short-time} we have an existence time $T'_\e,$ defined in terms of $\rho'$ just like $T_\e$ was defined in terms of $\rho:$ 
$$ T'_\e := \frac{\e^{2(1 - \a)}}{2 C_0 (\rho')^{2}},$$
with the same constant $C_0$ as in \eqref{def:Te:th2}. 
At this point we extended the short-time solution of Section \ref{sec:short-time} from $[0, T_\e]$ up to $[0, T_\e + T'_\e].$

\subsection{A sequence of continuation steps} Consider the size of the solution at $T_\e + T'_\e.$ Again, the bound for $w_2$ derives from the bound for $w_1,$ by the arguments of Section \ref{sec:w2}. For $w_1$ we can use the bounds of Section \ref{sec:w1}, which are valid so long as $\| w_1 \|_{H^1} \leq \rho$ and $\| w_2 \|_{L^2} \leq \rho.$ Thus 
$$ \max( \| w_1(T_\e + T'_\e)\|_{H^1}, \| w_2(T_\e + T'_\e)\|_{L^2}) \leq \frac{\rho}{5} + 2 C \rho (T_\e + T'_\e).$$
This gives a new ``initial" radius $\rho_{2,\e}.$ An associated existence time $T_{2,\e}$ is given by the arguments of Section \ref{sec:short-time}. 

Thus we have radii $\rho_{j,\e},$ with $\rho_{0,\e} = \rho,$ $\rho_{1,\e} = \rho',$ and associated existence times $T_{j,\e},$ with $T_{0,\e} = T_\e,$ $T_{1,\e} = T'_\e,$ defined for $j \geq 1$ by  
$$ %
 \rho_{j,\e} = \frac{6}{5} \rho + 12 C \rho \cdot \sum_{0 \leq k \leq j-1} T_{k,\e}, \qquad T_{j,\e} := \frac{\e^{2(1 - \a)}}{2 C_0 \rho_{j,\e}^{2}},
 $$ %
and we can keep going so long as $w_1$ is bounded by $\rho$ in $H^1$ norm and $w_2$ is bounded by $\rho$ in $L^2$ norm.

\subsection{Existence up to time $O(1)$} 

The (a priori finite) sequence $(\rho_{j,\e})_j$ is increasing, with $\rho_{0,\e} = \rho.$ Let $j$ be such that the sequence $(\rho_{k,\e})_k$ is defined up to index $j,$ and  $\rho_{k,\e} \leq 2 \rho$ for all $k \leq j.$ Such an index $j$ exists if $\e$ is small enough, depending on $\rho:$ we can take $j = 1.$ We will show by induction that $j$ can be chosen to be very large, depending on $\e.$ 

For any such $j,$ we have 
\be \label{low:t} \frac{\e^{2(1 - \a)}}{2 C_0 (2 \rho)^{2}} \leq T_{k,\e} \leq \frac{\e^{2(1 -  \a)}}{2 C_0 \rho^{2}}, \qquad \mbox{for all $k \leq j,$}\ee
implying
$$ \rho_{k,\e} \leq \frac{6}{5} \rho + \frac{6 C}{C_0 \rho} k \e^{ 2(1-\a)}, \qquad \mbox{for all $k \leq j.$}$$
From \eqref{bd:w1:1}-\eqref{bd:w1:2}, we deduce 
$$ %
\max\Big( \Big\| w_1\big(\sum_{0 \leq  k \leq j} T_{k,\e}\Big) \Big\|_{H^1}, \Big\| w_2(\sum_{0 \leq k \leq j} T_{k,\e}) \Big\|_{L^2}\Big) \leq \frac{\rho}{5} + \frac{C}{C_0 \rho} j \e^{2(1 - \a)}.
$$ %
In particular, by an immediate induction, we see that
\be \label{def:j} j := \left[ \frac{C_0 \rho^{2}}{2 C \e^{2(1 - \a)}} \right] - 1\ee
is such that the conditions $\rho_{k,\e} \leq 2 \rho$ are satisfied for all $k \leq j.$ 
(In \eqref{def:j}, $[ \, \cdot \, ]$ denotes the integer part.) Thus for this value of $j$ \eqref{def:j}, we have \eqref{low:t}, which means that we can extend the solution up to time
$$ t_\star := \sum_{0 \leq k \leq j} T_\e  \geq j  \frac{\e^{2(1 - \a)}}{2 C_0 (2 \rho)^{2}} \geq \frac{1}{16 C},$$
for $\e$ small enough.  		 
 Going back up the chain of changes of variables, the solution $(w_1, w_2)$ to \eqref{eq:w1:1}-\eqref{eq:w1:2}-\eqref{eq:w2} gives a solution $u$ to \eqref{1}, over the same time interval and with the same regularity, and the proof is complete.

\section{Proof of Theorem \ref{th:3}} 

The main difference with the proof of Theorem \ref{th:2} is that we perform here a large number $O(|\ln \e|)$ of integration by parts in time in the ordinary differential equation for $w_1$ that is derived from the normal form reduction.

\subsection{The initial-value problem for high-frequency data} 

System \eqref{modified} takes the form
$$ %
\d_t u + {\mathcal C} \big( A \d_x u + q_0(u_1) B \d_x u\big)  + \e i D \d_x^2 u = 0,
$$ %
with $A,$ $B$ and $D$ as in Section \ref{sec:proof:starts}, and $q_0(u_1) = 3 u_1^2.$ The operator ${\mathcal C}$ is the component-wise complex conjugation in $\C^2:$ 
\be \label{def:mathcalC}
 {\mathcal C}: \quad z = (z_1,z_2) \in \C^2 \longrightarrow (\bar z_1, \bar z_2) \in \C^2,\ee
 where $\bar z$ is the complex conjugate of $z \in \C.$ We consider concentrating data with amplitude $\l > 0,$ for some $\l$ which will be chosen small, but independent of $\e:$ 
\be \label{2:modified}
u(0,x) = \l u^0(x/\e^2),
\ee
where $u^0$ is independent of $\e$ and belongs to $H^1(\T) \times L^2(\T).$
We look for $u$ in the form 
$$ %
u(t,x) = \l \tilde u(t,x/\e^2) = (\l \tilde u_1, \l \tilde u_2)(t,x/\e^2).
$$ %
Thus the initial-value problem \eqref{modified}-\eqref{2:modified} takes the form 
\be \label{4:modified} \left\{ \begin{aligned} 
\d_t \tilde u + \frac{1}{\e^2} {\mathcal C} \big(A  \d_x \tilde u +  q_0(\l \tilde u_1) B \d_x \tilde u \big) + \frac{i}{\e^3} D \d_x^2 \tilde u & = 0, \qquad t \geq 0, \\ \tilde u(0,x) & =  u^0(x),
\end{aligned}\right.\ee
with $x \in \T.$ Just like in the regularized Euler-Van-der-Waals system \eqref{vdw:i}, the mean mode of the solutions to \eqref{4:modified} is conserved over time. In particular, given data $u^0$ such that $\Pi_0 u^0 = 0,$ we have 
 $\Pi_0 \tilde u(t) \equiv 0,$
so long as $\tilde u$ is defined. 

\subsection{The conservation law in the rescaled spatial frame} 

For the putative solution $(\tilde u_1,\tilde u_2)$ of \eqref{4:modified}, the energy is 
$$ %
{\mathcal E}(t) := \frac{\e^{2} \l^4}{4} \int_\T \Re e \, u_1(t,x)^4  \, dx  -  \frac{\e^{2} \l^2}{2} \int_{\T} \Re e \,  \tilde u_1(t,x)^2 \, dx  + \frac{\e^{2} \l^2 }{2} \| \tilde u_2(t,\cdot) \|_{L^2}^2 \equiv {\mathcal E}(0) \leq 0,
$$ %
by assumption \eqref{negative:energy:modified}. 
This implies 
\be \label{v2leqv1:p2:modified}
 \| \tilde u_2(t) \|_{L^2}^2 \leq \int_{\T} \Re e \,  \tilde u_1(t,x)^2 \, dx - \frac{\l^2}{2} \int_\T \Re e \, \tilde u_1(t,x)^4  \, dx.
\ee

\subsection{Change of variable to normal form} \label{sec:normal:form:modified} 

We now let
$$ %
 v = (\Id + \e M)^{-1} \tilde u, \qquad \mbox{with $M  = - ( m \circ {\mathcal C}) J,$ \quad $\dsp{J := \left(\begin{array}{cc} 0 & 1 \\ 1 & 0 \end{array}\right),}$}
$$ %
where ${\mathcal C}$ is the component-wise complex conjugation in $\C^2$ \eqref{def:mathcalC}. In view of Lemma \ref{lem:m}, we have the bounds, for $0 < \e < 1:$ 
$$ %
(1 - \e) \| v \|_{L^2} \leq  \| \tilde u \|_{L^2} \leq (1 + \e) \| v \|_{L^2},
$$ %
and 
$$ %
(1 -\e C ) \| v \|_{L^\infty} \leq \| \tilde u \|_{L^\infty} \leq (1 + \e C ) \| v \|_{L^\infty},
$$ %
with the same positive constant $C$ as in the pointwise estimate \eqref{est:m:pointwise} in Lemma \ref{lem:m}. Estimate \eqref{bd:M:L2} extends to Sobolev spaces, and we have 
$$ %
(1 - \e) \| v \|_{H^{s'}} \leq  \| \tilde u \|_{H^{s'}} \leq (1 + \e) \| v \|_{H^{s'}}, \qquad \mbox{for any $s'\in \R.$} 
$$ %
We denote $(v_1, v_2)$ the components of $v.$  

\subsection{Conservation of the mean mode and energy} \label{sec:cons:check:modified}

By conservation of the mean mode \eqref{conserved}, assumption on the datum, and $\Pi_0 \circ m\equiv 0,$ we have  
 $\Pi_0 v_1  \equiv0$ and  $\Pi_0 v_2 \equiv0,$
so long as $v_j$ are defined. %
Using \eqref{est:m:pointwise} we find, starting from \eqref{v2leqv1:p2:modified}, the bound \eqref{w2leqw1:2}, just like in the proof of Theorem \ref{th:2}.  

\subsection{Cancellation and the reduced system} Details of the computation are given in Section \ref{sec:normal:form}. The key cancellation here takes the form
$$  [i D \d_x^2, M] + {\mathcal C} A \d_x = - [J,D] \d_x \circ {\mathcal C} +  {\mathcal C} A \d_x = 0,
$$ %
since ${\mathcal C}$ commutes with $A$ (because $A$ has real entries), and with $\d_x.$  
The reduced system is 
\be \label{reduced:system:modified}
 \d_t v + \frac{i}{\e^3} D \d_x^2 v  +  \frac{1}{\e^2} {\mathcal C} \big( q_0(\l \tilde u_1) B \d_x v \big) + \frac{1}{\e} E  v + R v = 0,
 \ee
 where 
$$ %
E v := \left(\begin{array}{c} i v_1 + i \l^2 (\Id - \Pi_0) (v_1^3) \\ i (1 - \overline{q_0(\l \tilde u_1)})  v_2 \end{array}\right).
$$ %
 and the remainder $R$ is defined in terms of $E$ just like in Section \ref{sec:normal:form}:  
$$ %
 R v := - (\Id + \e M)^{-1} M  E v + \left(\begin{array}{c} \e^{-1}\big(  m (q_0(\l \tilde u_1) \d_x v_1) -  m( q_0(\l v_1) \d_x v_1) \big) \\ 0 \end{array}\right).
 $$ %
Just like in Section \ref{sec:normal:form}, the system \eqref{reduced:system:modified} is partially decoupled. The equation in $v_1$ is a non-linear ordinary differential equation: 
$$ %
 \d_t v_1 + \frac{i}{\e} v_1 + \frac{i \l^2}{\e}  (\Id - \Pi_0) (v_1)^3 = - (R v)_1 ,
$$ %
and the equation in $v_2$ is a semi-linear Schr\"odinger equation: 
$$ %
\begin{aligned} 
 \d_t v_2 + \frac{i}{\e^3} \d_x^2 v_2 + \frac{i}{\e} (1 - \overline{q_0(\l \tilde u_1)}) v_2 & = - \frac{1}{\e^2} \overline{q(\l \tilde u_1) \d_x v_1} - (R v)_2.
 \end{aligned}
$$ %
In \eqref{eq:checku1}-\eqref{eq:checku2} we took into account the fact that $\Pi_0 v_j \equiv0,$ as described in Section \ref{sec:cons:check}. We let
$$ %
  (w_1, w_2)(t,x) := e^{- i t/\e} (v_1, v_2)(t,x),
$$ %
so that the system in $(w_1,w_2)$ is
\be \label{w1:w2} \left\{\begin{aligned}
 \d_t w_1 & = - \frac{i \l^2}{\e} e^{2 i t/\e} (\Id - \Pi_0) (w_1^3) + R_1, \\ 
 \d_t w_2 + \frac{i}{\e^3} \d_x^2 w_2 & = \frac{3 \l^2}{\e^2} e^{- 2 i t/\e} \overline{(\tilde u_1)^2} \d_x \bar w_1 + \frac{3 i \l^2}{\e} \overline{(\tilde u_1)^2} w_2 +  R_2, 
 \end{aligned}\right. 
\ee
with $R_j := -  e^{- i t /\e} (R v)_j.$ 

\subsection{A sequence of integrations by parts} \label{sec:ibp:modified}
We integrate in time equation \eqref{w1:w2}(i) in $w_1$ and find the implicit representation
 \be \label{implicit:w1}
 w_1(t) =  w_1^0 - i \e^{-1} \l^2 \int_0^t e^{2 i t'/\e} (\Id - \Pi_0) w_1(t')^3  \, dt' + \int_0^t  R_1(t')  \, dt'.
  \ee 
We can integrate by parts in time in the first integral above. By doing so, we gain a factor $\l^2.$ This is a process that can be repeated an arbitrarily large number of times, as opposed to the previous case, see subsection \ref{IBP}. After $n-1$ integration by parts, the $O(\e^{-1})$ term in the implicit representation has a prefactor $\l^{2 n}.$ When $n$ is so large that $\e^{-1} \l^{2n} = O(1)$ in the limit $\e \to 0,$ that is for $n = O(|\ln \e|),$ then all the terms in the implicit representation of $w_1$ are bounded in time $O(1),$ and this is how we can reach an existence time $O(1)$ starting from data with small but $\e$-independent amplitude. 

In order to formalize the argument, we let
\be \label{def:mu:f}
\mu := - i \l^2 e^{2 i t/\e} , \qquad f: u \in L^\infty \longrightarrow f(u) = (\Id - \Pi_0) u^3 \in L^\infty.
\ee
Then, the ordinary differential equation in $w_1$ is 
$$ %
\e \d_t w_1 = \mu f(w_1) + \e R_1,
$$ %
and the implicit representation \eqref{implicit:w1} takes the form
$$ %
 w_1(t) =  w_1^0 +\frac{1}{\e} \int_0^t \mu f(w_1(t'))  \, dt' + \int_0^t  R_1(t')  \, dt'.
$$ %
We integrate by parts to find
\be \label{implicit:1} \begin{aligned}
w_1(t) & = w_1^0 + \frac{1}{2 i} (\mu f(w_1(t)) - \mu(0) f(w_1^0)) - \frac{1}{2i} \int_0^t \mu \d_t f(w_1(t')) \, dt' + \int_0^t R_1(t') \, dt' \\ & = w_1^0 + \frac{1}{2 i} (\mu f(w_1(t)) - \mu(0) f(w_1^0)) - \frac{1}{2i \e} \int_0^t \mu^2 f'(w_1(t')) f(w_1(t')) \, dt' \\ & + \int_0^t (1 - \frac{\mu}{2 i}  f'(w_1(t'))) R_1(t') \, dt'.
\end{aligned} 
\ee
Above the leading $O(\e^{-1})$ term now has a $\mu^2$ factor: this is the gain of a power of $\l^2$ that we announced. We denote
$$ f_0(u) := f(u), \qquad f_1(u) := f'(u) \cdot f(u),$$ and define by induction
$$ f_{n+1}(u) := f'_n(u) \cdot f(u), \quad \mbox{for $n \geq 1,$}$$
so that
$$ \d_t f_{n}(w_1) := f'_n(w_1) \d_t w_1 = \frac{\mu}{\e} f_{n+1}(w_1) +  f'_n(w_1) R_1.$$ 
With this notation, the implicit representation \eqref{implicit:1} is 
$$ %
\begin{aligned} 
w_1(t) = w_1^0 + \frac{1}{2 i} (\mu f(w_1(t)) - \mu(0) f(w_1^0)) & - \frac{1}{2i \e} \int_0^t \mu^2 f_1(w_1(t')) \, dt' \\ &  + \int_0^t (1 - \frac{\mu}{2 i} f'_0(w_1(t'))) R_1(t') \, dt',
\end{aligned} 
$$ %
and, integrating by parts again, we find
$$ %
\begin{aligned}
w_1(t) & = w_1^0 + \frac{1}{2 i} (\mu f_0(w_1(t)) - \mu(0) f_0(w_1^0)) - \frac{1}{2 i} \frac{1}{4i} (\mu^2 f_1(w_1(t)) - \mu^2(0) f_1(w_1^0)) \\ & + \frac{1}{(2i)(4 i) \e} \int_0^t \mu^3 f_2(w_1(t')) \, dt' \\ & + \int_0^t (1 - \frac{\mu}{2 i} f'_0(w_1(t')) + \frac{\mu^2}{(2i)(4i)} f'_1(w_1(t'))) R_1(t') \, dt'.
\end{aligned} 
$$ %
We deduce that for all $n \geq 1,$ we have
\be \label{implicit:n} 
w_1(t) = P_n(w_1, w_1^0) + \frac{(-1)^n}{\e \Pi_{j=1}^n (2ij)} \int_0^t \mu^{n+1} f_n(w_1(t'))\,dt' + \int_0^t {\bf R}_n(t') \,dt',
\ee
with notation
\be \label{def:P_n}
P_n(w_1, w_1^0) := w_1^0 + \sum_{0 \leq k \leq n-1} \frac{(-1)^k}{\Pi_{j=1}^{k+1}(2i j)} \big( \mu^{k+1} f_k(w_1(t)) - \mu^{k+1}(0) f_k(w_1^0)\big), 
\ee
and
\be \label{def:Rn}
{\bf R}_n := \Big(1 + \sum_{0 \leq k \leq n-1} \frac{(-\mu)^{k+1}}{\Pi_{j=1}^{k+1}(2i j)} f'_{k}(w_1) \Big) R_1 .
\ee

\subsection{Short-time existence} Let $(w_1^0, w_2^0)$ be given in $H^1 \times L^2,$ satisfying \eqref{negative:energy:modified}. We define the map $F$ on $B_1(\rho, T) \times B_0(\rho, T)$ (these balls are defined in \eqref{def:balls}) by $F = (F_1,F_2)$ with%
$$ %
F_1(w_1,w_2) := P_n(\mu, w_1, w_1^0) + \frac{(-1)^n}{\e \Pi_{k=1}^{n} (2 i k)} \int_0^t \mu^{n+1} f_n(w_1(t'))\,dt' + \int_0^t {\bf R}_n(t') \,dt',
$$ %
and
$$ %
F_2(w_1,w_2) := e^{- i t \d_x^2/\e^3} w_2^0 + \int_0^t e^{- i (t - t') \d_x^2/\e^3} \Big(\frac{3 \l^2}{\e^2} e^{- 2 i t/\e} \overline{(\tilde u_1)^2} \d_x \bar w_1 +  \frac{3 i \l^2}{\e} \overline{(\tilde u_1)^2} w_2 +  R_2\Big) (t') \, dt'. 
$$ %

In view of the system \eqref{w1:w2} satisfied by $(w_1,w_2),$ and the implicit representation \eqref{implicit:n}, a fixed point of $F$ is a solution to \eqref{w1:w2} in $C^0([0, T], H^1 \times L^2).$ 
By a straightforward induction, we find the bounds
\be \label{bd:fn}
\| f_n(u) \|_{H^1} \leq \prod_{0 \leq k \leq n} (2 k + 1) \cdot \| u \|_{L^\infty}^{2 (n+1)} \| u \|_{H^1}, \qquad \mbox{for $n \geq 1,$ for all $u \in H^1(\T),$}
\ee
and, for all $u, v \in H^1$ and $n \geq 1:$  
\be \label{bd:f'n}
\| f'_n(u) v \|_{H^1} \leq \prod_{0 \leq k \leq n} (2k + 1) \Big( \| u \|_{L^\infty}^{2 (n+1)} \| v \|_{H^1} + \| u \|_{L^\infty}^{2 n+1} \| v \|_{L^\infty} \| u \|_{H^1}\Big).
\ee
With the definition of $\mu$ in \eqref{def:mu:f}, this implies that $P_n$ defined in \eqref{def:P_n} satisfies the bound
$$ \| P_n(\mu, w_1, w_1^0) \|_{H^1} \leq \| w_1^0\|_{H^1} + \sum_{0 \leq k \leq n-1}  (c \l)^{2(k+1)} \big( \| w_1 \|_{L^\infty}^{2(k+1)} \| w_1 \|_{H^1} + \| w_1^0 \|_{L^\infty}^{2(k+1)} \| w_1^0 \|_{H^1} \big).
$$
With $\| w_1 \|_{H^1} \leq 1$ and $\| w_1^0\|_{H^1} \leq 1,$ this implies
\be \label{bd:Pn}
 \| P_n(\mu, w_1, w_1^0) \|_{H^1} \leq \Big(1 + \frac{(c \l)^2}{(1 - (c \l)^2)^2}\Big) \| w_1^0\|_{H^1} + \frac{(c \l)^2}{(1 - (c \l)^2)^2} \| w_1 \|_{H^1},
\ee 
where $c > 0$ is a constant for the Sobolev embedding $H^1(\T) \hookrightarrow L^\infty(\T).$ 
Besides, by \eqref{bd:fn}, for $w_1 \in B_1(\rho, T)$ with $\rho < 1$ we have 
\be 
\label{bd:Qn}
\Big\| \frac{1}{\e \Pi_{k=1}^n (2ik)} \int_0^t \mu^{n+1} f_n(w_1(t'))\,dt' \Big\|_{H^1} \leq \frac{2 n + 1}{\e}(c \l)^{2(n + 1)} \rho T. 
\ee
Given $\l$ such that $c \l < 1,$ we now choose $n$ such that 
\be \label{cond:n} 
\frac{2 n + 1}{\e}  (c \l)^{2(n + 1)} \leq 1.
\ee
This means in particular
$$ n > \frac{|\ln \e|}{|\ln (c \l)|}.$$
The last term in $F_1$ involves the remainder ${\bf R}_n$ described in \eqref{def:Rn}. With \eqref{bd:f'n}, we have the bound
$$ \| {\bf R}_n \|_{H^1} \leq \Big(1 + (1 + \rho) \sum_{0 \leq k \leq n-1}  (c \l)^{2(k+1)}\Big) \| R_1 \|_{H^1}.$$
The remainder $R_1$ is entirely analogous to the $R_1$ we had in the proof of Theorem \ref{th:1}, so that $\| R_1(t) \|_{H^1} \leq C_R \rho$ for $t \in [0,T],$ just like in Section \ref{sec:remainder}. Thus
\be \label{bd:Rn}
\Big\| \int_0^t {\bf R}_n(t') \, dt' \Big\|_{H^1} \leq \Big(1 + \frac{(1 + \rho) (c \l)^2}{(1 - (c \l)^2)^2} \Big) C_R \rho T.
\ee 
Putting \eqref{bd:Pn}, \eqref{bd:Qn} and \eqref{bd:Rn} together, we find, for $\dsp{\l \leq \frac{1}{c \sqrt 2},}$ for $n$ satisfying \eqref{cond:n}, and for $(w_1, w_2) \in B_1(\rho, T) \times B_0(\rho, T)$ the bound 
\be \label{bd:F1:modified}
\| F_1(w_1,w_2)\|_{H^1} \leq 3 \| w_1^0 \|_{H^1} + 2 (c \l)^2 \rho + \rho T + (3 + 2\rho) C_R \rho T.
\ee 
The bound for $F_2$ is derived as in the proof of Theorem \ref{th:1}. We find
\be \label{bd:F2:modified} 
\| F_2 \|_{L^2} \leq \| w_2^0 \|_{L^2} + \e^{-2} C \rho^{3} T + C_R \rho T,
\ee
for some constant $C > 0$ which depends only on $c.$ From \eqref{bd:F1:modified}-\eqref{bd:F2:modified}, and contraction bounds entirely analogous to the ones from Section \ref{sec:contraction}, we derive the existence and uniqueness of a fixed point to $F$ in $B_1(\rho, T_\e) \times B_0(\rho, T_\e),$ with $T_\e = O(\e^2),$ for $\rho < 1.$ 
 
 \subsection{Continuation} The arguments are identical to the continuation arguments at the end of the proof of Theorem \ref{th:2}. The second component $w_2$ of the unknown is controlled by the first component $w_1,$ thanks to the assumption of a negative initial energy and the conservation of energy: indeed, see \eqref{v2leqv1:p2:modified}, which, just like in Section \ref{sec:cons:check}, implies the bound \eqref{w2leqw1:2}. As a consequence, the singular bound in $F_2$ \eqref{bd:F2:modified} is used only to initiate the fixed point argument. As we saw above, this singular bound yields a small $O(\e^2)$ existence time. But then the continuation argument uses only the $F_1$ bound \eqref{bd:F1:modified}, which is not singular in $\e,$ hence an eventual existence time which is $O(1).$  

 \subsection*{Acknowledgment.} The first named author acknowledges support from the University of Padova STARS project ``Linear and Nonlinear Problems for the Dirac Equation" (LANPDE).

{\footnotesize }

\end{document}